\documentclass{article}

\usepackage{tikz-cd}
\usepackage[english]{babel}

\usepackage{fullpage}

\usepackage{enumitem}
\usepackage{comment}
\usepackage{esvect}
\usepackage{amsmath}
\usepackage{amssymb}
\usepackage{amsfonts}
\usepackage{amsthm}
\usepackage{graphicx}
\usepackage{csquotes}
\usepackage{mathtools}
\usepackage[colorlinks=true, allcolors=blue]{hyperref}
\usepackage[capitalize,noabbrev]{cleveref}
\usepackage{xcolor}
\usepackage{soul}
\usepackage{caption}
\usepackage{float}
\usepackage{subcaption}

\theoremstyle{plain}
\newtheorem{theorem}{Theorem}
\newtheorem*{theorem*}{Theorem}
\newtheorem{lemma}[theorem]{Lemma}
\newtheorem*{lemma*}{Lemma}

\newtheorem{question}[theorem]{Question}

\theoremstyle{remark}
\newtheorem{remark}[theorem]{Remark}

\def\lb{[\![}
\def\rb{]\!]}

\newcommand{\naturals}{\mathbb{N}}
\newcommand{\integers}{\mathbb{Z}}

\newcommand{\N}{\mathbb{N}}

\newcommand{\Z}{\mathbb{Z}}

\newcommand{\triangulations}[2]{\mathcal{M}_{#1, #2}}
\newcommand{\percnp}[2]{\mathcal{M}_{#1, #2}^{\mathrm{perc}}}
\newcommand{\perc}{\mathcal{M}^{\mathrm{perc}}}
\newcommand{\krewerasnk}[2]{\mathcal{K}_{#1, #2}}
\newcommand{\kreweras}{\overset{\leftarrow}{\mathcal{K}}}
\newcommand{\krewerasbar}{\overline{\mathcal{K}}}
\newcommand{\rootedge}{\vv{e}_r}
\newcommand{\peelsteps}[1]{S\left({#1}\right)}

\title{From Kreweras walks to branching perimeter processes of percolated triangulations}
\author{Renan Gross \and Emmanuel Kammerer \and Frederik Ravn Klausen \thanks{DPMMS, University of Cambridge. \{rg751, ek672, frk23\}@cam.ac.uk}}

\begin{document}
\maketitle

\begin{abstract}
In this note, using a result of Bernardi, Holden and Sun, we give an explicit geometric relation between the perimeter of the peeling process along the percolation interface of a triangulation, and the corresponding Kreweras walk. The relation naturally extends to the branching peeling exploration. This sheds light on the relation between the growth-fragmentation process and correlated Brownian excursions discovered by Da Silva, Powell and Watson.
\end{abstract}

\begin{figure}[H]
    \centering
    \includegraphics[width=0.745\linewidth]{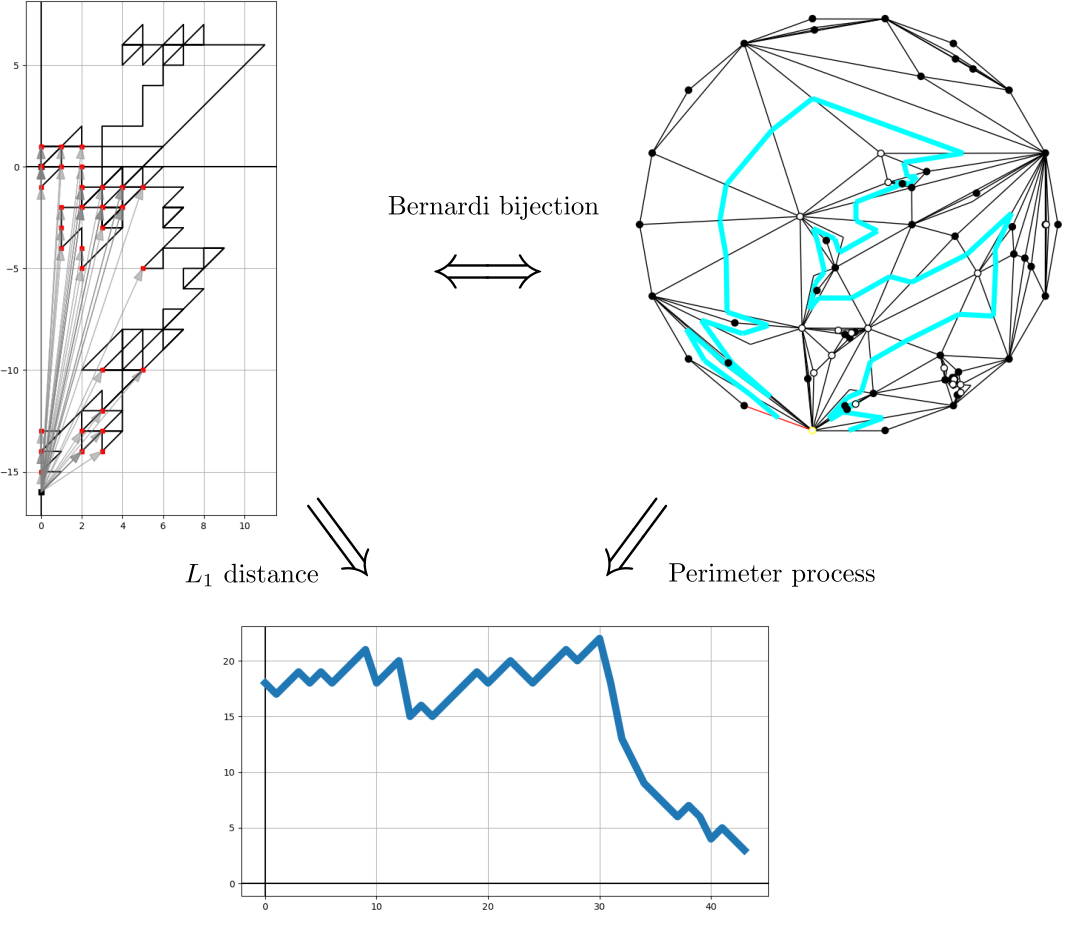}
    \caption{\textbf{Top left}: a Kreweras walk with $200$ steps. \textbf{Top right}: the equivalent percolated triangulation, along with its percolation interface. \textbf{Bottom}: the length of the perimeter when exploring along the interface. The top equivalence and bottom-right implication are known; in this note, we show that the perimeter curve is obtained from the Kreweras walk by the $L_1$ distance of certain non-excursion points (marked by red squares) to the endpoint (vectors denoted by grey arrows). }
    \label{fig:correspondence}
\end{figure}

\section{Introduction}
In \cite{DSPW25}, it was discovered that a certain type of branching process, known as a self-similar growth-fragmentation, is embedded in a two-dimensional Brownian excursion subject to some geometric conditions. This result has a natural interpretation in terms of Liouville quantum gravity (LQG) via the mating of trees \cite{DMS21}, where the growth-fragmentation records the boundary length of an exploration in LQG. The purpose of this note is to prove that the embedding actually holds in the discrete setting.

Let us already state our main result, even though some of the exact definitions will only be given in the next section. We denote the integers by $\integers$ and the positive integers by $\naturals$. For integers $a \leq b$, denote by $\lb a,b \rb$ the set $\{a, a+1, \ldots, b\}$.

A percolated planar triangulation $(M, \sigma)$ with boundary of perimeter $p$ is a planar map $M$ made of one $p$-gon and finitely many triangles, equipped with a percolation configuration $\sigma$ on the set of vertices of the map. The map $(M, \sigma)$ has a distinguished oriented edge called root edge oriented from a white vertex to a black vertex on the boundary and such that the $p$-gon lies on the right-hand side of the root edge. Consider the percolation interface starting from the root edge. The peeling exploration along the interface starts from the $p$-gon and is obtained by adding one by one the triangles along the interface. Sometimes, the triangle that is discovered separates the unexplored region into two regions. In this case, one of these two regions does not contain the rest of the interface and is added to the explored region. For integer $ n \geq 0$, we denote by $P(n)$ the number of edges on the boundary of the unexplored region after $n$ steps. Note that $P(0)=p$. The exploration stops when the last triangle of the interface is discovered.

A Kreweras walk is a (finite) walk in $\Z^2$ with steps in $\{(1,0), (0,1), (-1, -1)\}$. The bijection from \cite{Ber07, BHS23} maps every percolated planar triangulation $(M,\sigma)$ with $p-1$ black vertices and only one white vertex on the boundary to a Kreweras walk that starts from $(0,0)$, ends at $(0, -p+2)$, and stays in the quadrant $\{(x,y) \in \integers^2: x\ge 0, y \ge -p+2\}$. Moreover, each step in $\{(1,0), (0,1) \}$ corresponds to an internal triangle of $M$.

Let $X:\lb 0, N \rb \to \integers^2$ be a Kreweras walk. We say that a time $n\in \lb 0, N \rb$ is in a \emph{cone excursion} if there exists $n_1 \in \lb n+1, N \rb$ such that for all $m \in \lb n, n_1-1 \rb$, we have $X(m) \in X(n_1) + \naturals^2$. The step $X(n+1)-X(n)$ is a \emph{spine-step} if $n$ is not in a cone excursion and  $X(n+1)-X(n) \in \{(1,0), (0, 1)\}$.

\begin{theorem} \label{thm:explicit_perimeter}
    Let $(M, \sigma) $ be a percolated planar triangulation where the boundary has exactly one white vertex, and let $X: \lb 0, N \rb \to \integers^2$ be its corresponding Kreweras walk (of length $N$). Denote by $L$ be the number of spine steps of $X$, and for $n \in \lb 1,L\rb$, let $T(n)$ be the time of $n$-th spine step of $X$. Then the perimeter process along the percolation interface of $(M,\sigma)$ has $L$ steps in total and is given by
    \begin{equation*}
        P(n) = \lVert  X(T(n+1))-X(N) \rVert_1 + 2\, 
    \end{equation*}
    for all $n \in \lb 0, L-1 \rb$.
\end{theorem}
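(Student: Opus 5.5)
We will work directly with the Bernardi--Holden--Sun bijection, read as a sequential exploration. In \cite{BHS23} the Kreweras walk $X$ is produced by exploring $(M,\sigma)$ along a space-filling path. At each time $m$ the explored part is a submap whose unexplored complement is a triangulation with a boundary, and this boundary splits into a black arc and a white arc. The coordinates of $X(m)-X(N)$ record (up to the shift of $2$ coming from the two edges adjacent to the root/target) the lengths of these two arcs. Steps $(1,0)$ and $(0,1)$ add an inner triangle with two vertices on the current boundary, and so lengthen one arc by one. A step $(-1,-1)$ either closes a bubble or shortens both arcs. The first step of the plan is therefore to state this invariant precisely: for every time $m$,
\begin{equation*}
\lVert X(m)-X(N)\rVert_1+2
\end{equation*}
equals the perimeter of the unexplored region at time $m$ in the BHS exploration. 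This holds as long as $X(m)\in X(N)+\mathbb{N}^2$, which is guaranteed by the quadrant constraint.

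The second step relates the BHS exploration to the peeling along the percolation interface. The BHS path visits triangles in the order of a depth-first traversal. When the interface triangle peeled at some time separates the unexplored region into two parts, the BHS path first fills the part \emph{not} containing the rest of the interface, then returns. Filling this part corresponds to a segment $[n,n_1]$ of the walk during which $X$ stays in the cone $X(n_1)+\mathbb{N}^2$ and returns to its apex at $n_1$, that is, to a cone excursion. Conversely, every cone excursion should come from filling such a swallowed bubble, since after filling, the boundary lengths are exactly those before entering the bubble.

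Consequently the times not in a cone excursion whose step is $(1,0)$ or $(0,1)$ are exactly the times at which the BHS path discovers an interface triangle. These are the spine steps, so $L$ equals the number of interface triangles, i.e.\ the number of peeling steps. Moreover, just before the $(n+1)$-th spine step the BHS explored region coincides with the peeling explored region after $n$ steps. All bubbles swallowed by earlier interface triangles have by then been fully visited, being cone excursions that have closed, while no later triangle has yet been visited. Applying the invariant at time $T(n+1)$ then gives $P(n)=\lVert X(T(n+1))-X(N)\rVert_1+2$. The index shift reflects that the spine step \emph{at} $T(n+1)$ is the $(n+1)$-th triangle.

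The main obstacle is the identification in the second step: making rigorous that cone excursions coincide with the filling of bubbles cut off by interface triangles. This includes the nested case, where a $(-1,-1)$ step closes a bubble, and the convention for which region is kept when the separating triangle has a vertex on the boundary. I would first prove this by induction on the number of triangles, using the recursive (Tutte/peeling) decomposition underlying the bijection in \cite{Ber07}. The step to check is that removing the first interface triangle decomposes $X$ into the step itself, possibly one cone excursion, and the walk of the remaining triangulation with one white boundary vertex.
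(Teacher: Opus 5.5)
\textbf{There is a genuine gap: your step-one invariant is false as stated, and the identification you call the main obstacle is left unproven.}

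Your step one cannot be imported from \cite{BHS23}. A $c$-step builds no triangle, so the set of triangles not yet built is the same at times $m-1$ and $m$. Yet $X(m-1),X(m)\in X(N)+\integers_{\ge 0}^2$, so $\lVert X(m)-X(N)\rVert_1$ drops by exactly $2$, and your equality must fail at one of these two times. For instance, take $w=abc$, for which $M$ is a $2$-gon with one internal white vertex and two triangles. Both triangles exist at $m=2$, but $\lVert X(2)-X(3)\rVert_1+2=4$. The reason is that $\overline{\Phi}$ always creates a new vertex at an $a$- or $b$-step. If in $M$ that vertex already lies on the current boundary (the triangle closing a bubble), this is only resolved later by the matching $c$-step, through a gluing or a recolouring. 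So inside a cone excursion the ``unexplored region'' is a cut-open object rather than a sub-triangulation of $M$. The only times at which you actually use the invariant are the spine times $T(n+1)$. There, once the explored regions are identified, it is literally the statement of the theorem, so step one carries no independent content.

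Everything therefore rests on the identification you flag as the main obstacle. The spine steps must be the interface triangles in order, and each top-level close-matched excursion of height $\ell$ must be a single $L_\ell$ or $R_\ell$ step whose swallowed bubble has perimeter $\ell+1$. This gives both that there are $L$ peeling steps and the sizes of the drops, and you do not prove it. The induction you sketch is also not closed. After a first step $C_a$ the remaining region has two white boundary vertices, so you would need general Dobrushin boundary conditions. You would also need a proof that the suffix of the word encodes the re-rooted remaining map under $\overline{\Phi}$.

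The paper obtains the identification in two steps:
\begin{enumerate}
\item It imports \cite[Theorem 3.6]{BHS23} as \cref{lem:bhs}: the letters of $\widehat{\pi}(w)$ are the bicolour triangles of $\mathrm{Spine}(M,\sigma)$ in interface order, and each $\overline{a}_\ell,\overline{b}_\ell$ is paired with a monochromatic face of degree $\ell+1$.
\item It proves that the spine is the final state of the unfilled interface peeling (\cref{lem:link_spine_peeling}), so $\widehat{\pi}(w)$ is the peeling word.
\end{enumerate}
After that no geometric reading of $X$ is needed. If $Y$ is the walk of $\widehat{\pi}(w)$, then $X(T(n+1))=Y(n)$. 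The quadrant constraint then gives $\lVert Y(n)-Y(L)\rVert_1-\lVert Y(n-1)-Y(L)\rVert_1=(Y(n)-Y(n-1))\cdot(1,1)\in\{1,-\ell\}$, which are exactly the increments of $P$.
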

In the above theorem, the peeling process linearly follows the percolation interface. However, one can also consider a branching peeling exploration (following the so-called exploration tree), where instead of ``filling in'' when the unexplored region is separated into two, the exploration branches and explores both sides. In this setting, for every triangle $t$ of $M$, we can define the perimeter process $(P_t(n))_{n \in \lb 0, L_t-1 \rb}$ of the exploration branch towards $t$, where $L_t$ is the total number of triangles discovered by this branch. \cref{thm:explicit_perimeter} can  be extended to this setting as well.
\begin{theorem}[informal; see \cref{sec:branching} for a precise statement]\label{thm:branching}
    Let $(M, \sigma) $ be a percolated planar triangulation where the boundary has exactly one white vertex, and let $X$ be its corresponding Kreweras walk. Let $t$ be a triangle of $M$. Then
    \[
    P_t(n)=\lVert X(T_t(n+1))- X(D_t(n+1)) \rVert_1 +2 \, ,
    \]
    where $T_t(n)$ is the time of the $n$-th spine step in a restriction $X\vert_{\lb 0, m_t \rb}$ of the Kreweras walk for some $m_t$ defined using the Bernardi bijection, and $D_t(n)$ is the closure time of the excursion of $X$ that contains the trajectory $X\vert_{\lb T_t(n), m_t \rb}$.
\end{theorem}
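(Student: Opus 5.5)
The plan is to deduce the branching statement from \cref{thm:explicit_perimeter}, combined with the recursive structure of the Bernardi bijection, by induction along the branch of the exploration tree leading to $t$. The first step is to fix the precise objects. For the triangle $t$, let $m_t$ be the time at which the Bernardi bijection reads $t$; this is the time of the step of $X$ associated with $t$. The exploration branch towards $t$ is then a concatenation of linear interface explorations. Each time the branch is separated into two regions, it continues in the component containing $t$, and in that component it follows a new percolation interface, rooted at the edge where the separation occurred.

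The structural input I would establish is the following. Consider a separation at step $n$ of the branch, after which the branch goes into the region that \cref{thm:explicit_perimeter} would "fill in". That region is itself a percolated triangulation whose boundary, after recolouring, has one white vertex. Its Kreweras walk is, up to translation and time shift, the sub-path of $X$ forming the cone excursion that starts right after the corresponding step. This sub-path is exactly an interval of times $\lb a, D \rb$ on which $X$ stays in $X(D)+\naturals^2$, and $D$ is its closure time. This fact should follow from the way the bijection of \cite{Ber07, BHS23} handles separating triangles, where the filled-in regions are encoded by the cone excursions. I would take it as the key lemma, proved by reading off the recursive description of the bijection.

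Granting this lemma, the proof is an induction on the number of branchings along the branch to $t$. On each segment between branchings, the branch is a linear interface exploration of a sub-triangulation encoded by an excursion $X|_{\lb a,D\rb}$. \cref{thm:explicit_perimeter} applied to this sub-walk gives the perimeter as $\lVert X(T)-X(D)\rVert_1+2$, where $T$ runs over its spine steps and the endpoint $X(N)$ is replaced by $X(D)$. The remaining task is to check that the spine steps of the sub-walks used successively, restricted to times at most $m_t$, are precisely the spine steps of the restricted walk $X|_{\lb 0,m_t\rb}$. The reason is that a time $n\le m_t$ lies in a cone excursion of $X|_{\lb 0,m_t\rb}$ exactly when that excursion closes before $m_t$, that is, when it is a region the branch toward $t$ does not enter. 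With this identification, $D_t(n+1)$ is the closure time of the innermost excursion of $X$ containing $X|_{\lb T_t(n+1),m_t\rb}$, which gives the formula. The indexing shift $n+1$ is inherited directly from \cref{thm:explicit_perimeter}.

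The main obstacle is the key lemma together with this identification of spine steps. In particular, one must handle the boundary convention carefully. The sub-region entered after a split has its root at a white–black edge, but it may have more than one white boundary vertex, and it is not clear that it reduces to the one-white-vertex setting. If it does not, I would first try to show that the relevant portion of the boundary is glued in the same way as in the original setting, so that \cref{thm:explicit_perimeter} applies verbatim to the excursion. Failing that, I would re-run the proof of \cref{thm:explicit_perimeter} directly on the sub-walk.
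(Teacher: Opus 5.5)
Your proposal is correct and is essentially the paper's proof. The paper also inducts along the branch and identifies the recoloured separated region with $\overline{\Phi}(w_{i+1}\cdots w_{q-1})$ for the close matching $(w_i,w_q)$ of the separating step. It obtains this key lemma from the last point of \cref{lem:bhs} together with \cref{lem:colour_change}, which says the closing $c$-step recolours exactly the vertex created by $w_i$. It then applies \cref{thm:explicit_perimeter} on each segment.

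Two small points of precision. First, the closing step $w_q$ is excluded from the sub-walk, so $D_t=q-1$ is the last time before the apex $X(q)$; the walk stays in $X(D_t+1)+\naturals^2$, not in $X(D_t)+\naturals^2$, and using the apex instead would shift the perimeter by $2$. Second, your boundary worry is harmless. After an $R_\ell$ step the recoloured region has exactly one white boundary vertex, and after an $L_\ell$ step exactly one black one. The latter is the colour-reversed mirror of the setting of \cref{thm:explicit_perimeter}: swap $a\leftrightarrow b$ and the two coordinates, and the $L_1$ norm is unchanged.
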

\begin{remark}
    The proof of \cref{thm:explicit_perimeter} shows that the first coordinate of the vector $X(T(n))-X(N)$ is the number of white vertices on the boundary of the unexplored region, while the second coordinate gives the number of black vertices on the explored region. Using the same proofs, \cref{thm:explicit_perimeter} and \cref{thm:branching} extend to percolated planar triangulations with boundary conditions given by a connected set of $\ell\ge 1$ black vertices and $r\ge 1$ white vertices. We did not state the result at this level of generality in order to emphasize the connection with \cite{DSPW25}.
\end{remark}

Several other models of random planar maps equipped with a statistical mechanics model can be encoded using a bijection with a walk, e.g.,
the more general Hamburger-Cheeseburger bijection for FK-decorated triangulations from \cite{She16}. One can consider the peeling exploration along the interface there as well. 
\begin{question}
Does an analogue of \cref{thm:explicit_perimeter} or \cref{thm:branching} hold in more general discrete statistical mechanics models? 
\end{question}
In the continuum, an analogous question is left open by \cite{DSPW25}.

\subsection{Motivation from random planar maps and Liouville quantum gravity}
Percolation on random triangulations is relatively well understood, thanks to several breakthroughs from different directions.

One approach to random triangulations is to study their scaling limits. Le Gall showed that, properly scaled, a uniformly random triangulation converges to the Brownian sphere \cite{LG13}, while Albenque, Holden and Sun showed that triangulations with boundary converge to the Brownian disk \cite{AHS20}. Another milestone was the identification of the Brownian sphere with a Liouville quantum gravity (LQG) of parameter $\gamma= \sqrt{8/3}$ \cite{MS20, MS21, MS21b, MS21c}. The scaling limit of the percolation interfaces was first described in \cite{BHS23}: the outer-boundaries of the clusters are given by an independent conformal loop ensemble of parameter $\kappa=6$ and the scaling limit of the associated exploration tree is given by a branching Schramm-Loewner Evolution (SLE$_6$) process. The key used to obtain this scaling limit are a bijection between percolated triangulations and a family of walks, called Kreweras walks, first identified by \cite{Ber07}, and the mating of trees correspondence discovered in \cite{DMS21}, which encodes an SLE-decorated LQG using a planar Brownian excursion. The scaling limit was further strengthened: the convergence was proven in \cite{GHS21} to hold jointly with the scaling limit towards the Brownian disk, and the convergence towards the $\sqrt{8/3}$-LQG was then proved to hold under the Cardy embedding in \cite{HS23}.

Another approach to the study of percolation on random triangulations is the peeling exploration introduced in \cite{Wat95, Ang03}, and generalised in \cite{Bud16}, see also \cite{Cur23}. The peeling exploration can be used to study various properties of triangulations, such as the growth of metric balls starting from the boundary, the behaviour of the random walk, as well as percolation. One can indeed explore the triangulation by following the percolation interface as in \cite{AC15, CR20,BC22}. The perimeter process is a crucial quantity in the study of the peeling exploration. Its scaling limit was obtained in \cite{CG17}. In general, when deciding which face to explore next, the peeling exploration may leave unexplored ``holes'' in the triangulation which are never explored. To discover the whole triangulation, one can define a branching peeling algorithm and record the perimeter processes along all the branches. In \cite{BCK18}, the scaling limit of the perimeter processes along all these branches was obtained and can be described by a branching process called a self-similar growth-fragmentation, which also appears in \cite{LGR20} in the study of metric balls in the Brownian sphere.

Even though some connections between the two approaches were already made at the discrete level between bijections with walks and peeling explorations (see \cite{BHS23}), the two scaling limits described above (mating of trees and growth-fragmentations) have only been related very recently. The first identification of a growth-fragmentation in a mating of trees was obtained in \cite{AdS22,AHPS21} in the critical case $\gamma=2$. More recently, the same growth fragmentation as in \cite{BCK18} was shown in \cite{DSPW25} to be embedded in the Brownian excursion encoding the mating of trees in the case $\gamma=\sqrt{8/3}$. In particular, they obtain that the growth-fragmentation gives the quantum boundary length of the unexplored region in a branching SLE$_6$ exploration.

\begin{remark} In view of \cref{thm:branching}, since the branching SLE$_6$ corresponds to the scaling limit of the exploration tree of a percolated triangulation, one can expect the scaling limit of the perimeter process along the exploration tree towards the growth-fragmentation obtained in \cite{BCK18} to hold jointly with the scaling limit of the Kreweras walk towards the Brownian cone excursion as in \cite{BHS23}. Given that the latter scaling limit holds jointly with the scaling limit of the triangulation with boundary towards the Brownian disk \cite{GHS21}, we expect the three scaling limits to hold jointly. In order to combine \cite{BCK18, BHS23, DSPW25} with \cref{thm:branching} to get the joint scaling limit, the main difficulty seems to get the scaling limit of the number of spine steps along the walk towards the local time on cone-free times defined in \cite{DSPW25}. We refer the interested reader to \cite[Lemma 9.25]{BHS23} for more details on this joint convergence.
\end{remark}
\section{Notation and setup}\label{sec:notation_and_setup}
When describing maps and walks, we often follow the notation of \cite[Sections 2 \& 3]{BHS23}.

\paragraph{Triangulations.}
For integers $n\geq 0$ and $p \geq 2$, let $\triangulations{n}{p}$ be the set of all type II planar triangulations with boundary with $n$ internal vertices and perimeter $p$; that is, every $M \in \triangulations{n}{p}$ satisfies:
\begin{itemize}
    \item $M$ is a planar map without self-loops (but possibly with multiple edges). 
    \item $M$ has a distinguished $p$-sided face called the \textit{boundary face} of the map. All other faces are \textit{internal} triangles. 
    \item The edges and vertices of the boundary face form a simple cycle. These edges and vertices are called \textit{boundary} edges and vertices; all other edges and vertices are \textit{internal}.
    \item $M$ has a distinguished boundary edge called the \textit{root edge} and denoted by $\rootedge$. It is oriented so that the boundary face is on its right-hand side.    
    \item $M$ has $n$ internal vertices.
\end{itemize}
Although not strictly a triangulation, by convention we define the map consisting of only a single root edge to be the only element of $\triangulations{0}{2}$.

Let $M \in \triangulations{n}{p}$. A site percolation configuration on $M$ is a colouring of its vertices by either black or white. A configuration is said to be \textit{Dobrushin} if it satisfies the Dobrushin boundary conditions: the base vertex of the root edge is coloured white, the tip vertex of the root edge is coloured black, and the black-coloured (resp. white-coloured) boundary vertices are all consecutive. Under Dobrushin boundary conditions, when $p>2$, there is a single bicoloured boundary edge other than the root; this is called the \textit{top} edge. Under these conditions, there is path in the dual graph from  the root edge to the top edge that always has white vertices on the left hand side and black vertices on the right hand side; this is called the \textit{percolation interface}.

When $p=2$ and $n=0$, there is only a single edge, the root is also the top. The edge clockwise to the top edge is said to be to its \textit{right}, and the edge anticlockwise to the top edge is said to be to its \textit{left} (the terms originate from a planar embedding of the map where both the root and the top edges are horizontal, and the top is above the root). We define
\begin{align*}
    \percnp{n}{p} &\coloneqq \{(M,\sigma) \mid M \in \triangulations{n}{p}, \sigma \text{ is Dobrushin  on } M\}, \hspace{5pt} \\    
    \perc & \coloneqq \bigcup_{n \geq 0, ~ p \in \naturals} \percnp{n}{p} \, .
\end{align*}
Let $\overline{\perc}$ be the set of percolated triangulations in $\perc$ equipped with a marking of the boundary edges as active or inactive such that:
\begin{itemize}
    \item The top edge is active.
    \item The root edge is inactive, except when it is the same as the top edge.
    \item The active edges are consecutive.
    \item The percolation interface goes through all triangles which have an active edge.
\end{itemize}
We identify $\perc$ as a subset of $\overline{\perc}$ by saying that all the boundary edges of $(M, \sigma) \in \perc$ are inactive, except the top edge. For an example of a triangulation where there is a single white vertex on the boundary, see \cref{fig:correspondence}.

\paragraph{Peeling.}
Roughly speaking, a peeling exploration of a triangulation $M$ is a process which marks the faces of $M$ as ``explored' or ``unexplored''. Initially, only the boundary face is explored, and all internal triangles are unexplored. At each step of the process, an edge is chosen on the boundary between the unexplored and explored faces according to some (possibly random) rule, and the unexplored face adjacent to it is explored. If this face separates the unexplored faces into two disjoint sets, then one of these sets is chosen and its faces are all marked as explored as well. The process continues until all faces have been explored. 

More formally, a \textit{peeling} of a percolated triangulation $(M,\sigma)\in \perc$ (or $\overline{\perc}$, where we do not distinguish between active and inactive edges) is an ordered list of percolated planar maps
$$E_0 = (M_0,\sigma_0), E_1 = (M_1,\sigma_1), \ldots $$
where each $M_i$ has a distinguished \textit{unexplored} simple face (apart from the last step, where there is no unexplored face). The rest of the faces of $M_i$ are \textit{explored}. By convention, $M_0$ is a two-faced map made of one $p$-sided polygon, whose exterior face is the boundary face of $M$ and is explored, and whose internal face is unexplored.

The Dobrushin configuration $\sigma$ offers a natural rule for selecting which edges to explore through, called the \textit{percolation interface peeling}. Consider the unique path of edges in the dual graph passing through the internal faces of $M$ from the root to the top edge, where every dual edge in the path crosses a bicoloured edge. At each step, the peeling process chooses the next bicoloured edge in this path, called the peeled edge, and considers the triangular face $t$ of $M$ which contains the bicoloured edge and which is contained in the unexplored face. If $t$'s third vertex is internal, $M_{i+1}$ is obtained from $M_i$ by adding $t$ to $M_i$. Otherwise, $t$ separates the previously unexplored face into two parts, and $M_{i+1}$ is obtained by adding $t$ and filling the part of the map which contains no faces that intersect the percolation path. There are thus four types of steps, as illustrated in \cref{fig:peeling_exploration}:
\begin{itemize}[itemsep = 0pt]
    \item $C_a$: An internal triangle with a new white internal vertex. 
    \item $C_b$: An internal triangle with a new black internal vertex. 
    \item $L_{\ell}$: An internal triangle with a white boundary vertex $\ell$ boundary edges to the left from the peeled edge (i.e. clockwise from the peeled edge)
    \item $R_{\ell}$: An internal triangle with a black boundary vertex $\ell$ boundary edges the the right from the peeled edge (i.e. anticlockwise from the peeled edge). 
\end{itemize}
\begin{figure}[ht]
    \centering
    \includegraphics[width=0.9\linewidth]{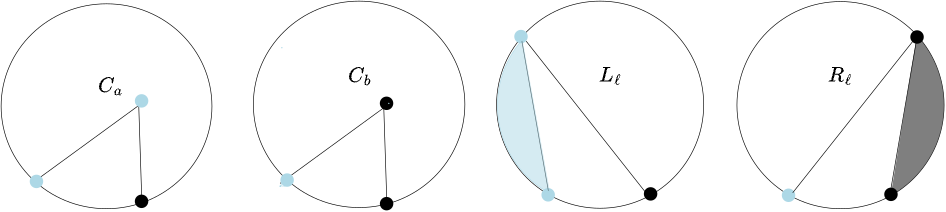}
    \caption{The four types of exploration steps $C_a, C_b, L_\ell, R_\ell$ in the peeling process. The peeled edge corresponds to the white-black edge on the bottom. In each step, a triangle is discovered and in every step of the form $L_\ell, R_\ell$, a submap with perimeter $\ell+1$ is filled in as ``explored''. In each case, the new peeled edge is the new edge with bicoloured endpoints.}
    \label{fig:peeling_exploration}
\end{figure}
The triangles discovered at steps of type $L_\ell$ or $R_\ell$ separate the unexplored face into two parts, and so these are the steps where multiple faces may be explored all at once. We denote the sequence of steps by $\peelsteps{M,\sigma}$; if $L$ steps are taken, we see this as a word $\tau = \tau_1 \cdots \tau_L$, with each $\tau_j$ in $\{C_a, C_b\} \cup\{ L_\ell, R_\ell: \ell \ge 1\}$.
The \textit{perimeter process} $P: \lb 0,L-1 \rb \to \naturals$ of a peeling $\{(M_i,\sigma_i)\}_i$ is the number of vertices (or edges, equivalently) on the boundary of the unexplored face of $(M_j,\sigma_j)$ (see \Cref{fig:peeling_perimeter}). If $M \in \percnp{n}{p}$, then $P(0) = p$ and for all $j \geq 0$,
\begin{equation}\label{eq:peeling_steps}
    P(j+1) - P(j) = \begin{cases}
        1 & \tau_{j+1} \in \{C_a, C_b\} \\
        -\ell & \tau_{j+1} \in \{L_\ell, R_\ell\} \, .
    \end{cases}        
\end{equation}

\begin{figure}[H]
    \centering
    \includegraphics[width=0.85\linewidth]{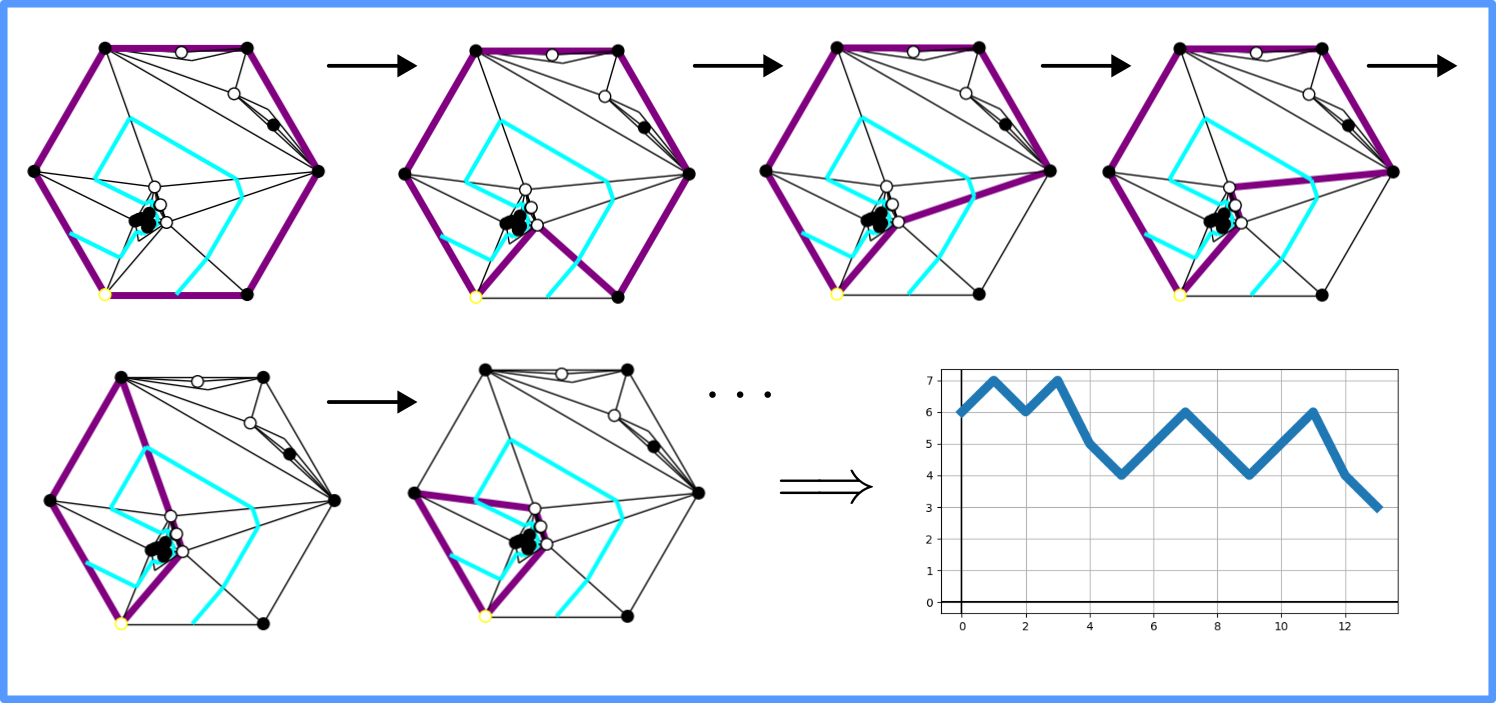}
    \caption{The perimeter of the peeling process along the percolation interface. The edges on the boundary of the unexplored face for each step of the peeling process are shown in bold purple; the first steps are $C_a, R_1, C_a, R_2, R_1\ldots$ The length of the perimeter as a function of time is shown at the bottom right.}
    \label{fig:peeling_perimeter}
\end{figure}

\paragraph{Unfilled peeling.} A variant of the percolation interface peeling is the \textit{unfilled interface peeling}. In this process, $C_a$ or $C_b$ steps are treated as before. However, if an $L_\ell$  (resp.\@ $R_\ell$) step is encountered in the $j$-th step, we glue a bicolour triangle $t$ to the peeled edge in the unexplored face, whose new vertex is white (resp.\@ black) and is placed on the boundary at a distance of $\ell$ edges to the left (resp.\@ right) from the peeled edge along the unexplored face of $E_j$. In this case, the unexplored face is split into one monochromatic white (resp.\@ black) face of degree $\ell+1$ and one bicolour face. The bicolour face is then the new unexplored face and the (one) edge it shares with $t$ is the next peeled edge. 

It is clear that the unfilled interface peeling can be recovered completely from the word $\peelsteps{M,\sigma}$ (unlike the original interface peeling, where the appearance of an $L_\ell$ or $R_\ell$ step does not tell us anything about the internal structure of the triangulation that was explored all at once).

\paragraph{Kreweras walks.} A \textit{Kreweras walk} $X:\lb 0, N \rb \to \integers^2$ of length $N \in \naturals$ starting at the origin is given by $X(0)=(0,0)$, and satisfies 
$$ X(i) - X(i-1) \in \{(1,0), (0,1), (-1,-1)\} \, $$
for all $i \in \lb 1, N\rb $. To each Kreweras walk there is an associated \textit{Kreweras word} $w = w_1\cdots w_N$ on the alphabet $\{a,b,c\}$, where the value of $w_i$ is $a$, $b$, or $c$ depending on whether $X(i)-X(i-1)$ is $(1,0)$, $(0,1)$ or $(-1,-1)$, respectively. Call $w_i$ an $a$-step (resp.\@ $b$-step, $c$-step) accordingly. We say that an $a$-step (resp.\@ $b$-step) $w_i$ and a $c$-step $w_j$ for $i<j$ are \emph{matched} if there is an equal number of $a$-steps (resp.\@ $b$-steps) and $c$-steps in $w_iw_{i+1}\cdots w_j$ and, for all $k \in \lb i, j-1 \rb$, there are strictly more $a$-steps (resp.\@ $b$-steps) than $c$-steps in $w_i w_{i+1} \cdots w_k$.

Let $\krewerasbar$ be the set of finite Kreweras walks such that in the associated Kreweras word, every $c$-step has at least one matching ($a$ or $b$) step. We will use this notation to refer to both the Kreweras walks and their corresponding words. 

For all integers $n,k \geq 0$, denote by $\krewerasnk{n}{k}$ the set of Kreweras walks in $\krewerasbar$ with $3n+2k$ steps which start at $(0,0)$, end at $(0,-k)$ and stay in the quadrant $\{(x,y) \in \integers^2: x\ge 0, y \ge -k\}$. Denote $\kreweras = \bigcup_{n,k} \krewerasnk{n}{k}$. 

\paragraph{The bijection.} Let us give a detailed description of the bijection between Kreweras walks and percolated triangulations, which was originally found by Bernardi for a specific case \cite{Ber07} and generalised in \cite{BHS23}. 

The general bijection $\overline{\Phi}: \krewerasbar \to \overline{\perc}$ is defined recursively as follows. We map the empty walk (with zero steps) to the triangulation made of only one active root edge oriented from a white vertex to a black vertex. Next, assume that we have defined the bijection $\overline{\Phi}$ for all the walks with $N-1$ steps for some $N\in \naturals$. Let $w_1\cdots w_N \in \krewerasbar$ be a walk with $N$ steps. Let $(M, \sigma)= \overline{\Phi}(w_1\cdots w_{N-1})$. Then $\overline{\Phi}(w_1 \cdots w_N)$ is defined as follows (see \cref{fig:bijection}): 
\begin{enumerate}
    \item If $w_N=a$ (resp.\@ $w_N=b$), then $\overline{\Phi}(w_1 \cdots w_N)$ is obtained by gluing a triangle on the top edge, where the new vertex of the triangle is white (resp.\@ black) and where the two new edges of the triangle are active.
    \item \label{enu:two_active_c} If $w_N=c$ and if there are active edges $e_\mathrm{left}$ and $e_\mathrm{right}$ on both the left and the right sides of the top edge, then consider the internal triangles next to $e_\mathrm{left}$ and $e_\mathrm{right}$. By definition of $\overline{\perc}$, the interface path goes through both triangles. If the triangle next to $e_\mathrm{left}$ (resp.\@ $e_\mathrm{right}$) is the last to be visited between the two, then we change the colour of the top left (resp.\@ right) vertex from white to black (resp.\@ black to white) and we glue the top edge to $e_\mathrm{right}$ (resp.\@ $e_\mathrm{left}$).
    \item \label{enu:one_active_c} If $w_N= c$ and there is no active edge on the left-hand side of the top edge, then consider the top edge, the top-right vertex $v_\mathrm{right}$ and the incident edge on the right-hand side $e_\mathrm{right}$. We first change the colour of $v_\mathrm{right}$ from black to white, making $e_\mathrm{right}$ the new top edge, then set the former top edge to be inactive. The case where there is no active edge on the right-hand side of the top edge is symmetric, with the colours reversed.
\end{enumerate}
Note that at every $a$-step or $b$-step, we create a new active edge on the boundary. Moreover, since by definition of $\krewerasbar$, every $c$-step has a matching $a$- or $b$-step, when we see a $c$-step, there is always at least one active edge other than the top edge. Therefore, the above disjunction covers all the possible cases.

The mapping $\overline{\Phi}$ is a bijection from $\krewerasbar$ to $\overline{\perc}$ \cite[Theorem 2.11]{BHS23}. Moreover, by \cite[Corollary 2.13]{BHS23}, its restriction $\Phi$ to $\kreweras$ is a bijection with $\perc$, and by \cite[Remark 2.14]{BHS23}, $\Phi$ induces a bijection between $\krewerasnk{n}{p-2}$ and percolated triangulations $\percnp{n}{p}$ with exactly one white vertex on the boundary for all $p\ge 2$ and $n\ge0$.

\begin{figure}[H]
    \centering
    \includegraphics[width=0.85\linewidth]{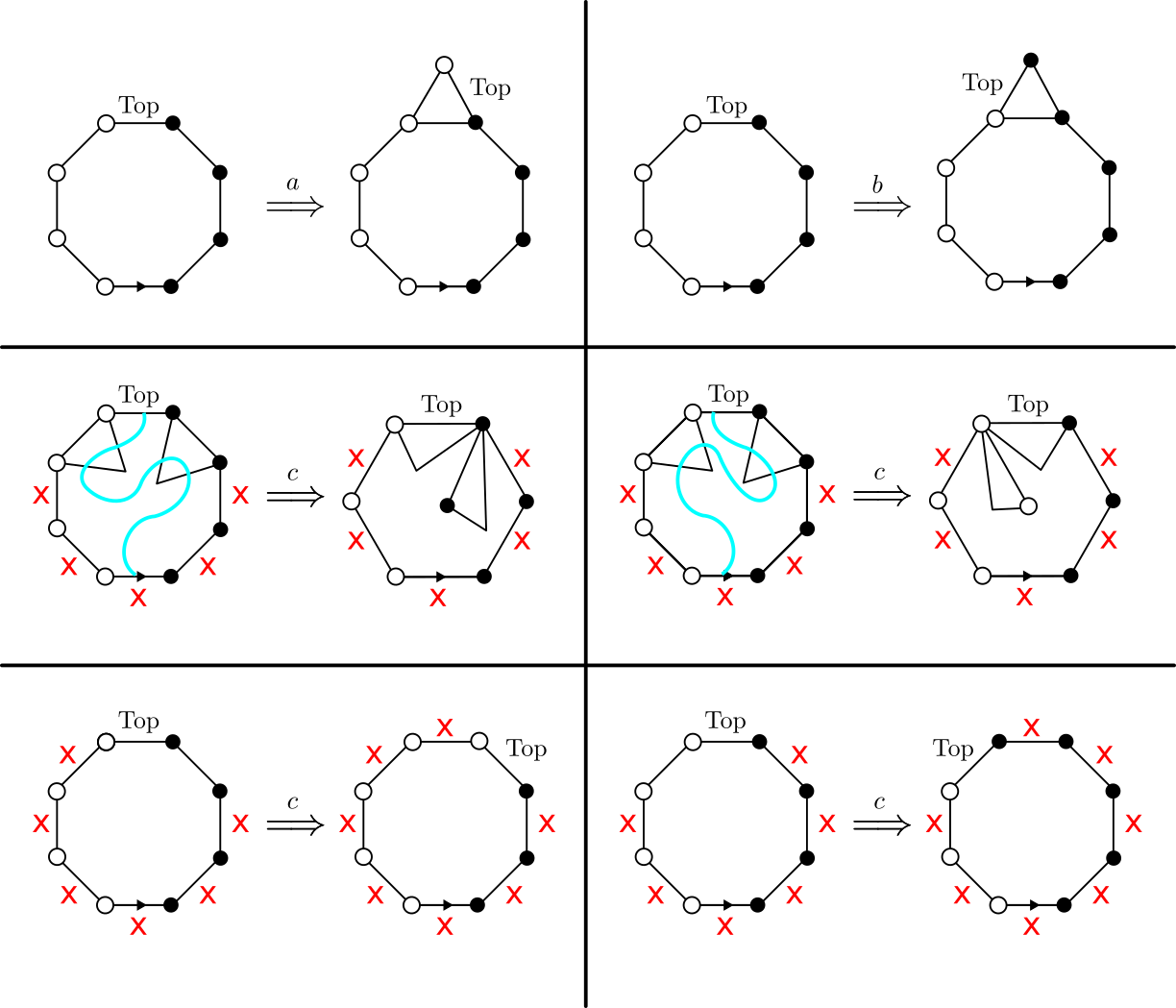}
    \caption{Building the map for the bijection, step by step. Red crosses indicate inactive edges. The top two steps depict $w_N \in \{a,b\}$, while the bottom four depict $w_N = c$; there, the construction depends on the active edges adjacent to the top, as well as the percolation interface.}
    \label{fig:bijection}
\end{figure}

\section{Proof of \texorpdfstring{\cref{thm:explicit_perimeter}}{}}
In order to analyse $\lVert  X(T(n+1))-X(N) \rVert_1$, we need to understand the walk $X$ at the spine steps. To this end, as in \cite{BHS23}, we introduce \textit{reduced Kreweras words}, defined as follows. Let $w=w_1\cdots w_N \in \krewerasbar$. If $w_i$ and $w_k$ are matched, we say that the steps $w_{i+1}\cdots w_{k-1}$ are \textit{enclosed} by the matching $w_i, w_k$. Assume that $w_k$ is a $c$-step and has two matching steps $w_i$ and $w_j$ with $i<j<k$. We say that $w_j$ is \textit{close-matched} with $w_k$ and that $(w_j, w_k)$ form a \textit{close matching}, while $w_i$ is \textit{far-matched} and $(w_i,w_k)$ form a \textit{far matching}. A step $w_i\in \{a,b\}$ is a \emph{spine step} if it is not enclosed by any close matching. Note that this definition agrees with the one given above the statement of \cref{thm:explicit_perimeter}.

For each close matching $(w_i,w_k)$ with 
 $w_i = a$, there are as many $a$-steps as $c$-steps in the subword $w_i \cdots w_k$, and no more $b$-steps than $c$-steps. Geometrically, at each step $w_{i+1},\cdots, w_{k-1}$ the walk is strictly to the right and above of the endpoint. The total displacement of the walk corresponding to $w_i\cdots w_k$ is $(0,-\ell)$, where $\ell \geq 0$ is equal to the number of $c$-steps minus the number of $b$-steps. Similarly, when $w_i =b$, the displacement of the walk is horizontal. In either case, we say that $\ell$ is the \emph{height of the cone excursion} corresponding to the matching $(w_i,w_k)$.

The \textit{reduced Kreweras word} $\widehat{\pi}(w)$ has letters $\{a,b\} \cup \{\overline{a}_\ell, \overline{b}_\ell: \ell \ge 1\}$ and is obtained from $w$ as follows: For each close matching $(w_i,w_k)$ that is not enclosed by another close matching, replace the subword $w_i \cdots w_k$  by the letter $\overline{b}_\ell$ if $w_i=a$, (resp.\@ by $\overline{a}_\ell$ if $w_i=b$), where $\ell$ is the height of the cone excursion corresponding to the matching $(w_i,w_k)$.

As the next lemma shows, the reduced Kreweras word of a map $(M,\sigma)$ is equivalent to the peeling steps $\peelsteps{M,\sigma}$.
\begin{lemma} \label[lemma]{lem:kreweras_peelstep_correspondence}
    Let $(M, \sigma) \in \perc $ and let $w = {\Phi}^{-1}(M,\sigma)$ be its corresponding Kreweras word. Then the reduced word $\widehat{\pi}(w)$ is obtained from $\peelsteps{M,\sigma}$ by replacing each instance of $C_a, C_b, L_\ell, R_\ell$ by $a,b,\overline{a}_\ell,\overline{b}_\ell$, respectively.    
\end{lemma}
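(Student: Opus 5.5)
We argue by strong induction on the length $N$ of $w$, tracking the maps $\overline{\Phi}(w_1\cdots w_m)$ for prefixes of $w$. The idea is to compare the growth of $\overline{\Phi}(w)$ along the word with the peeling exploration along the interface, read in the reverse direction. Because the bijection builds the map by gluing triangles onto the top edge, the triangles traversed by the interface appear in the order in which they are glued along the spine. Concretely, we first decompose $w$ as a concatenation of spine letters and maximal close-matched subwords $w_i\cdots w_k$, namely those not enclosed by another close matching. We then show that in $\overline{\Phi}(w)$ each spine letter $a$ (resp.\@ $b$) produces a triangle crossed by the percolation interface whose third vertex is a new internal white (resp.\@ black) vertex. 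This is exactly a $C_a$ (resp.\@ $C_b$) triangle. Likewise, each maximal close-matched subword with $w_i=a$ (resp.\@ $w_i=b$) and height $\ell$ should produce one interface triangle together with a filled-in region of perimeter $\ell+1$ attached at distance $\ell$ to the right (resp.\@ left), i.e.\@ an $R_\ell$ (resp.\@ $L_\ell$) step.

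The core step is a lemma about a single close matching $(w_i,w_k)$ with $w_i=a$. Write $(M',\sigma')=\overline{\Phi}(w_1\cdots w_{i-1})$. The claim is that processing $w_i\cdots w_k$ does three things:
\begin{enumerate}
\item it glues a triangle $t$ on the top edge of $M'$;
\item it closes a submap, lying on the side of the new black-side active edge, whose perimeter is $\ell+1$ and whose boundary uses $\ell$ existing active edges to the right of the top edge;
\item it leaves as the new top edge the bicoloured edge of $t$ that the interface crosses next.
\end{enumerate}
To prove it, we follow the walk inside the cone excursion. Since $w_i$ is close-matched, between $i$ and $k$ the walk stays strictly above and to the right of $X(k)$. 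By induction on the excursion, the letters strictly inside build a submap hanging off the active edge created by $w_i$, and they never consume the active edge of $t$ on the other side. The net count of $c$-steps minus $b$-steps equals $\ell$, so exactly $\ell$ active edges to the right are consumed via case~\ref{enu:two_active_c} or case~\ref{enu:one_active_c} of the bijection. The final $c$-step $w_k$ then glues the top edge to the last consumed edge, which closes the region. The $b$ case is symmetric, with the colours and the words left and right exchanged.

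Having shown this, we conclude as follows. The interface of $\overline{\Phi}(w)$ passes through exactly the triangles attached at spine letters and at the triangles $t$ of maximal close matchings, in the order of the word. Here we use the defining property of $\overline{\perc}$ that the interface visits every triangle with an active edge, together with the fact that gluings in case~\ref{enu:two_active_c} respect the interface order. Reading these steps from the root side, the sequence of triangles discovered by the interface peeling matches $\widehat{\pi}(w)$ letter by letter, with the claimed types $C_a,C_b,L_\ell,R_\ell$. We also check that the filled region of perimeter $\ell+1$ coincides with the region the peeling fills, namely the side containing no interface faces.

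The main obstacle is the excursion lemma, specifically the orientation and side bookkeeping. We must verify that case~\ref{enu:two_active_c} of the bijection, which chooses the side according to which triangle the interface visits last, never glues across the triangle $t$ inside the excursion, and that the cone condition is exactly what prevents this. I would handle it with a nested induction on excursion depth, using the invariant that inside an excursion the active edges created after $w_i$ always lie between the top edge and $t$.
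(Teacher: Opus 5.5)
Your route differs from the paper's, which does not re-analyse the bijection but combines \cref{lem:bhs} (from \cite{BHS23}), \cref{lem:link_spine_peeling} and \cref{lem:colour_change}. The real problem, though, is that your excursion lemma is false as stated, for close matchings that always occur here. You assume that the $\ell$ units of height of a close matching $(w_i,w_k)$ with $w_i=a$ are all paid by active edges to the right of the top edge in $\overline{\Phi}(w_1\cdots w_{i-1})$, and that $w_k$ closes the region by a gluing. This holds only when $w_k$ is far-matched to a $b$-step before $i$. In that case this $b$-step stays unmatched throughout the excursion, every $c$-step inside is handled by case~\ref{enu:two_active_c}, and your count works.

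However, a $c$-step may have a single matching step, and the paper treats that step as its close match. This is the second case in the proof of \cref{lem:colour_change}, and it is what makes $\widehat{\pi}$ agree with the cone-excursion definition of spine steps. Such a $c$-step is handled by case~\ref{enu:one_active_c}: nothing is glued, the top-left vertex turns black, and the former top edge becomes inactive, i.e.\ a boundary edge of $M$.

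For $w=ac$, $\overline{\Phi}(w)$ is a single triangle $(r_0,r_1,v)$ with $v$ black on the boundary. The peeling gives $R_1$ and $\widehat{\pi}(w)=\bar b_1$, yet $\overline{\Phi}(w_1\cdots w_{i-1})$ is the bare root edge, with no active edge to its right. For $w=aacc$ one gets $\widehat{\pi}(w)=\bar b_2$ and $R_2$, and the filled region is bounded by two boundary edges of $M$ created by $c$-steps of the excursion itself. With one white boundary vertex, all $p-2$ boundary edges other than the root and top arise this way, so this is not a corner case.

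In general, the face closed by the excursion consists of the edge of $t$, the $\ell_0\le \ell$ right active edges present at time $i-1$, and $\ell-\ell_0$ inactive edges produced by case-\ref{enu:one_active_c} steps of the excursion. You must prove this, and then check that these $\ell$ edges are exactly the next $\ell$ edges of the unexplored boundary to the right of the peeled edge. If you instead read ``close matching'' literally as requiring two matches, the gap just moves. Such $c$-steps then belong neither to a spine letter nor to a close-matched block, so your decomposition of $w$ is incomplete, and for $w=ac$ you would declare the $a$ a spine letter giving a $C_a$ step, whereas its vertex ends up black and on $\partial M$.

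Two further claims are asserted without argument. First, that the vertex created at a spine letter keeps its colour. This needs the fact that a $c$-step only recolours the vertex of its close-matched step, which is \cref{lem:colour_change}, and that in turn rests on the ordering of active triangles along the interface. Your appeal to ``gluings in case~\ref{enu:two_active_c} respect the interface order'' is precisely the part of \cite{BHS23} (\cref{lem:bhs}) that the paper cites rather than reproves, so you must either cite it or prove it. Second, that this vertex is not on the boundary of the unexplored region at the corresponding peeling time.

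The paper avoids the second issue by arguing from the peeling side. An $L_\ell$ or $R_\ell$ step produces a monochromatic face of the spine by \cref{lem:link_spine_peeling}, which by \cref{lem:bhs} forces a barred letter. The unbarred letters are therefore automatically $C$-steps, and \cref{lem:colour_change} only has to decide between $a$ and $b$.
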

The proof is given in the next subsection. With this lemma in hand, we can now prove our main theorem.

\begin{proof}[Proof of \cref{thm:explicit_perimeter} using \cref{lem:kreweras_peelstep_correspondence}]
Let $p$ be the number of sides of the outer boundary of $M$, and let $\tau = \tau_1\ldots \tau_L = \peelsteps{M,\sigma}$ be the peeling steps. As described in \eqref{eq:peeling_steps}, the $j$-th increment of the perimeter process is given by 
\begin{equation*}
    P(j+1)-P(j) = \begin{cases}
        1 & \tau_{j+1} \in \{C_a, C_b\}\\
        -\ell & \tau_{j+1} \in \{L_\ell, R_\ell\} \, ,
    \end{cases}        
\end{equation*}
with the initial condition $P(0) = p$.

Let  $Y : \lb0,L\rb \to \integers^2$ be the following walk: $Y(0) = (0,0)$, and the increments of $Y$ are obtained from the reduced Kreweras word  $\widehat{\pi} = \widehat{\pi}(w)$ by
\begin{equation}
    Y(n) - Y(n-1) = 
    \begin{cases}
        (1,0) & \widehat{\pi}_n = a \\    
        (0,1) & \widehat{\pi}_n = b \\
        (0,-\ell) & \widehat{\pi}_n = \overline{a}_\ell \\
        (-\ell,0) & \widehat{\pi}_n = \overline{b}_\ell .
    \end{cases} \label{eq:y_differences}
\end{equation}
As noted above, $X(T(n+1))=Y(n)$ for all $n \in \lb 0, L-1 \rb$. Furthermore, $X(N) = (0,-p+2)$ and so $Y(L) = (0,-p+2)$ as well. Since $X$ stays in the quadrant $\{(x,y) \in \integers^2 \mid x\geq 0, y\geq -p+2\}$, we have, for every $n \in \lb 1,L\rb$,
\begin{align*}
    \lVert Y(n) - Y(L) \rVert_1 - \lVert Y(n-1) - Y(L) \rVert_1 &= \lVert Y(n) - (0,-p+2) \rVert_1 - \lVert Y(n-1) - (0,-p+2) \rVert_1 \\
    &= Y(n)\cdot(1,1)+ p-2 - \left(Y(n-1)\cdot(1,1) +p-2 \right) \\
    &= \left(Y(n) - Y(n-1)\right) \cdot (1,1) \, .
\end{align*}
By \eqref{eq:y_differences}, this gives
\begin{equation*}
    \lVert  X(T(n+1))-X(N) \rVert_1 - \lVert  X(T(n))-X(N) \rVert_1 = \begin{cases}
        1       & \widehat{\pi}_n \in \{a, b\} \\
        -\ell   & \widehat{\pi}_n \in \{\overline{a}_\ell, \overline{b}_\ell \} \, , 
    \end{cases}
\end{equation*}
and so by \cref{lem:kreweras_peelstep_correspondence} the increments of $P$ and of $\lVert  X(T(n+1))-X(N) \rVert_1 + 2$ are identical. Since $\lVert  X(T(1))-X(N) \rVert_1 + 2 = \lVert (0,0)-(0,p+2) \rVert_1 +2 = p = P(0)$, the initial point is the same as well, and so they agree everywhere.
\end{proof}

\subsection{Proof of \texorpdfstring{\cref{lem:kreweras_peelstep_correspondence}}{}}
In order to prove the lemma, we relate both the reduced Kreweras word and the peeling steps to an intermediate object, called the spine of the map, to be defined below. \cref{lem:kreweras_peelstep_correspondence} will follow from two results: the first (\cref{lem:link_spine_peeling}) shows that the spine and the peeling process have the same triangles, while the second (\cref{lem:bhs}) shows that the steps of the reduced Kreweras walks correspond to triangles of the spine. Although the proof is not complicated, there are many objects involved; \cref{fig:proof_structure} shows the relations between them.

\begin{figure}[H]
    \centering
 \begin{tikzcd}
   w \arrow[d]\arrow[r, "\text{def}"] & \hat \pi(w) \arrow[d, "\text{\cref{lem:bhs}}"]\arrow[r, "\text{\cref{lem:kreweras_peelstep_correspondence}}"] & \arrow[l] \peelsteps{M,\sigma}\arrow[d, "\text{Peeling~steps}"]  \\
(M,\sigma) \arrow[u, "\bar \Phi"] \arrow[r,"\text{def}"]  \arrow[rr, bend right, "\text{def}"]  &  \mathrm{Spine}(M,\sigma)~~ \arrow[u] \arrow[r, "\text{\cref{lem:link_spine_peeling}}"] & \mathrm{~~UnfillPeel}(M,\sigma)\arrow[l] \arrow[u] 
\end{tikzcd}
    \caption{The relation between the objects involved in the proof of \cref{lem:kreweras_peelstep_correspondence}. Here $\mathrm{UnfillPeel}(M,\sigma)$ is the sequence of maps corresponding to the unfilled interface peeling in \cref{lem:link_spine_peeling}. \cref{lem:link_spine_peeling} shows that the lower face commutes, and \cref{lem:bhs} gives the bijection that makes the upper left square commute. \cref{lem:kreweras_peelstep_correspondence} follows upon showing that the right square commutes.} 
    \label{fig:proof_structure}
\end{figure}

Following \cite{BHS23}, the spine of a percolated triangulation $(M, \sigma) \in \overline{\perc}$, denoted by $\mathrm{Spine}(M, \sigma)$, is the percolated planar map obtained by: a) deleting the non-boundary edges and vertices which do not belong to a triangle traversed by the interface path; b) replacing every monochromatic edge incident to two triangles traversed by the interface with a double edge; c) replacing every monochromatic inactive edge incident to a triangle traversed by the interface with a double edge. The map $\mathrm{Spine}(M, \sigma)$ is then made of bicoloured triangles -- the triangles traversed by the interface -- together with some monochromatic faces. See \cref{fig:spine_of_map} for an example.
\begin{figure}[H]
    \centering
    \includegraphics[width=0.7\linewidth]{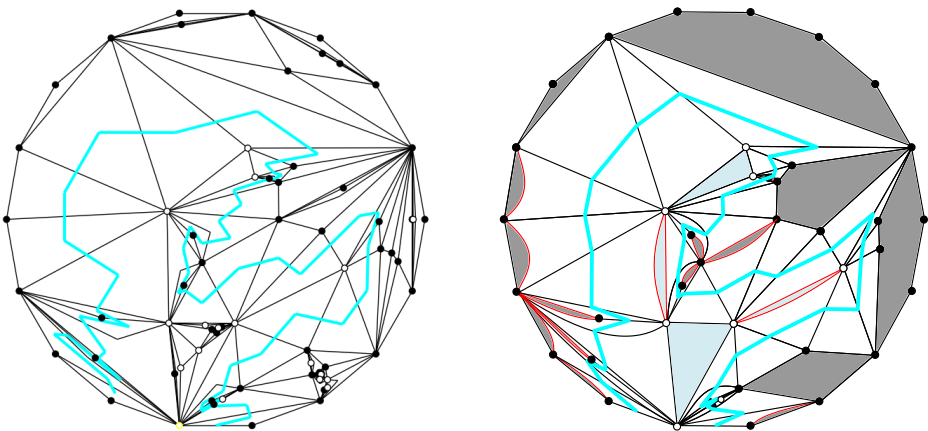}
    \caption{\textbf{Left}: a percolated triangulation $(M,\sigma)$, together with its percolation interface. \textbf{Right}: The spine $\mathrm{Spine}(M,\sigma)$. The monochromatic faces have been coloured accordingly. Double edges are shown in red; all boundary edges except the top are inactive.}
    \label{fig:spine_of_map}
\end{figure}

While the spine is obtained by deleting triangles and duplicating monochromatic edges, the unfilled peeling process $E_0, E_1, \ldots, E_L$ grows the map by adding triangles and possibly monochromatic faces, one step at a time. Upon ``peeling the spine" it turns out that these two processes are in fact equivalent.
\begin{lemma}\label[lemma]{lem:link_spine_peeling}
    Let $(M, \sigma) \in {\perc}$, and let $E_0 = (M_0, \sigma_0), \ldots, E_L = (M_L, \sigma_L)$ be the unfilled interface peeling. For all $j \in \lb 0, L\rb$, let $\tilde{E}_j$ be the map obtained by deleting from $\mathrm{Spine}(M, \sigma)$ all the bicolour triangles after the first $j$ triangles traversed by the percolation interface starting from $\rootedge$, and deleting all the monochromatic faces which have an edge which does not belong to the boundary of the map or to any of these $j$ triangles. Then 
    $$E_j = \tilde{E_j} \, .$$
    In particular, $E_L = \mathrm{Spine}(M,\sigma)$.    
\end{lemma}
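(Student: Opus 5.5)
We argue by induction on $j$, comparing the step from $E_j$ to $E_{j+1}$ with the step from $\tilde E_j$ to $\tilde E_{j+1}$. Let $t_1,\dots,t_L$ denote the triangles traversed by the percolation interface, in order from $\rootedge$. The base case $j=0$ is direct. $\tilde E_0$ keeps no bicolour triangle, so every internal edge of the spine is deleted. Every monochromatic face of the spine has an edge lying on some $t_i$, so all of them are removed as well. What remains is the boundary cycle of $M$ with its $p$ edges. This is $M_0$, the $p$-gon with one explored outer face and one unexplored inner face. We also record, as part of the induction hypothesis, the following invariant. In $\tilde E_j$ the unexplored face is the union of the spine faces not yet kept. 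The next peeled edge is the edge of $t_j$, or the root edge when $j=0$, through which the interface passes into $t_{j+1}$.

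For the inductive step, we split according to the type of the third vertex $v$ of $t_{j+1}$ in $M$. If $v$ is internal and has not yet appeared, the step is $C_a$ or $C_b$. Passing from $\tilde E_j$ to $\tilde E_{j+1}$ then only adds $t_{j+1}$. No monochromatic face of the spine can have all its edges in the boundary and $t_1,\dots,t_{j+1}$: such a face would need an edge incident to $v$, and $v$ is fresh, so its other spine faces involve later triangles. This matches the $C$-step of the unfilled peeling. Otherwise $v$ already lies on the boundary of the unexplored face of $\tilde E_j$. This boundary is made of boundary edges of $M$ and of edges of the triangles $t_1,\dots,t_j$. Say $v$ is white and sits $\ell$ edges to the left of the peeled edge. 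Adding $t_{j+1}$ closes a cycle of length $\ell+1$ consisting of monochromatic white edges. In the spine this cycle bounds a face, because the triangles deleted when forming the spine are exactly those not traversed by the interface. Those triangles lie in that region, and duplication of monochromatic edges makes the region a single monochromatic face. All of its edges now belong to the boundary or to $t_1,\dots,t_{j+1}$, so it appears in $\tilde E_{j+1}$. This is precisely the white face of degree $\ell+1$ created by the $L_\ell$ step of the unfilled peeling. The $R_\ell$ case is symmetric. The new peeled edge is the bicoloured edge of $t_{j+1}$ in both processes, so the invariant is preserved.

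The main obstacle is the middle claim of the last case: the region cut off by $t_{j+1}$ should correspond to exactly one monochromatic spine face with exactly those $\ell+1$ edges. Three things must be checked: the interface never re-enters that region; the region contains no other traversed triangles; and the double-edge rules b) and c) in the spine definition guarantee a separate face even when $\ell=1$ or the region is degenerate. To handle this, we plan to use that the interface is a simple dual path separating white from black. The cut-off region is bounded by a monochromatic cycle, so an interface path confined to bicoloured edges cannot cross that cycle. Finally, taking $j=L$ gives $E_L=\mathrm{Spine}(M,\sigma)$: every bicolour triangle has been kept, and every monochromatic face has all its edges in the boundary or in traversed triangles.
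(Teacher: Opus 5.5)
Your proposal is correct and follows the same route as the paper: induction on $j$, split by whether the step is $C_a/C_b$ (no new monochromatic face appears) or $L_\ell/R_\ell$ (the cut-off face has all its edges on $\partial M$ or on $t_1,\dots,t_{j+1}$, so it survives in $\tilde E_{j+1}$). Your extra check that the cut-off region is exactly one monochromatic spine face makes explicit a point the paper's proof takes for granted. That check uses two facts: the interface only crosses bicoloured edges, so it cannot enter a region bounded by a monochromatic cycle; and rules b) and c) cover the degenerate $\ell=1$ case.
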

\begin{proof}
Let $\tau = \tau_1 \cdots \tau _L = \peelsteps{M,\sigma}$ be the steps of the peeling process. For $j=0$, the statement holds by definition of $E_0$. Next, let us assume that the statement is true for $E_j$ for some $j \in \lb 0, L-1 \rb$. We then have two cases.
\begin{enumerate}
    \item $\tau_{j+1} \in \{C_a, C_b\}$. Then no monochromatic face is added in the unfilled interface peeling $E_{j+1}$. The same is true for $\widetilde{E}_{j+1}$, since there is no new monochromatic face whose edges lie in the boundary of the boundary face or in the edges of the $j+1$ triangles already discovered. Therefore, $\widetilde{E}_{j+1}=E_{j+1}$
    \item $\tau_{j+1} \in \{L_\ell, R_\ell\}$. Then we discover a triangle $t$ traversed by the percolation interface whose top vertex is on the boundary of the unexplored region; this creates a monochromatic face $f$ in $E_{j+1}$. Every edge of $f$ either belongs to $t$ or is one of the edges already present in $E_j$. Therefore, by the induction hypothesis, the edges of $f$ either belong to the $j+1$ triangles already discovered or to the boundary of the boundary face, and so this face is not deleted. Thus, $\widetilde{E}_{j+1}=E_{j+1}$.
\end{enumerate}
\end{proof}

The relation between the reduced Kreweras word and the spine of the map is given by the following lemma, which is essentially \cite[Theorem 3.6]{BHS23}.
\begin{lemma}[implied by the proof of Theorem 3.6 of \cite{BHS23}]\label[lemma]{lem:bhs}
    Let $(M,\sigma) \in \overline{\perc}$, and let $w = \overline{\Phi}^{-1}(M,\sigma)$. Then $\widehat{\pi}(w)$ and $\mathrm{Spine}(M, \sigma)$ are related as follows.
    \begin{itemize}
        \item The steps of $\widehat{\pi}(w)$ correspond to the bicolour triangles of $\mathrm{Spine}(M, \sigma)$ in their order of appearance along the percolation interface from $\rootedge$ to the top edge.
        
        \item Every step $\overline{a}_\ell$ (resp.\@ $\overline{b}_\ell$) of $\widehat{\pi}(w)$ corresponds to a pair $(t,f)$, where $t$ is a bicolour triangle and $f$ is a monochromatic white (resp.\@ black) face of $\mathrm{Spine}(M, \sigma)$ which shares an edge with $t$. Conversely, every monochromatic face is associated with such a step $\overline{a}_\ell$ or $\overline{b}_\ell$. The face $f$ has degree $\ell+1$, where we recall that $\ell$ is the height of the cone excursion associated with the step $\overline{a}_\ell$ (resp.\@ $\overline{b}_\ell$), and the edges around $f$ belong either to the bicolour triangles drawn up to step $\overline{a}_\ell$ (resp.\@ $\overline{b}_\ell$) or to the boundary of $M$.
        
        Furthermore, the percolated triangulation with boundary formed by $t$ and all the vertices and edges of $(M, \sigma)$ inside and on the boundary of $f$ is equal to $\overline{\Phi}(w')$, where $w'$ is the cone excursion of $w$ corresponding to $\overline{a}_\ell$ (resp.\@ $\overline{b}_\ell$), i.e.\@ the subword of $w$ starting with the associated $a$-step (resp.\@ $b$-step) and ending with the close matching $c$-step.
    \end{itemize}
\end{lemma}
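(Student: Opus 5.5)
The statement to prove is \cref{lem:bhs}, which the paper attributes to the proof of \cite[Theorem 3.6]{BHS23}. I would prove it by induction on the length $N$ of $w \in \krewerasbar$, following the recursive definition of $\overline{\Phi}$. Appending a letter to $w$ changes $\widehat{\pi}(w)$ in one of three ways:
\begin{itemize}
    \item it appends $a$ or $b$;
    \item it appends a $c$ that closes no close matching;
    \item it appends a $c$ that closes a close matching $(w_i,w_N)$, collapsing an outermost block $w_i\cdots w_N$ into a single letter $\overline{b}_\ell$ or $\overline{a}_\ell$.
\end{itemize}
The plan is to show that each such change to $\widehat{\pi}(w)$ matches the corresponding change to $\mathrm{Spine}(\overline{\Phi}(w))$. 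The induction hypothesis should be strengthened slightly. Besides the correspondence, I would track which boundary edges are active: they should be exactly the edges of the bicolour spine triangles created by the unmatched $a$/$b$ letters of $\widehat{\pi}(w)$ (those not yet far-matched), arranged on the left or right of the top edge according to colour.

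\textbf{Case $w_N\in\{a,b\}$.} $\overline{\Phi}$ glues a bicolour triangle on the top edge. This triangle is traversed by the interface right after all earlier ones, no monochromatic face is created, and the reduced word gains the letter $a$ or $b$. The claim is immediate, and the active-edge invariant is preserved.

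\textbf{Case $w_N=c$ with no active edge on one side.} This is case~\ref{enu:one_active_c}: only a recolouring happens and the former top edge becomes inactive. One checks that this $c$ is far-matched or unmatched as a close match; since there is no active edge on one side, no close matching can end here. So $\widehat{\pi}$ keeps its letters and the spine keeps its bicolour triangles. Rule c) of the spine definition turns the newly inactive monochromatic edge into a double edge, which is a degree-$2$ face. I need to check that this degenerate face is consistent with the statement. I expect it to be absorbed because such an edge lies inside a cone excursion at a higher level of nesting, or else is counted as part of the boundary.

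\textbf{Case $w_N=c$ with active edges on both sides.} This is case~\ref{enu:two_active_c}, where gluing the top edge to $e_{\mathrm{left}}$ or $e_{\mathrm{right}}$ closes a region. Here I identify the close-matched partner $w_i$ of $w_N$ with the last-visited triangle among those adjacent to $e_{\mathrm{left}}$ and $e_{\mathrm{right}}$. The other partner is the far-matched one. Everything built after $w_i$ is then enclosed in a monochromatic region whose boundary consists of edges of spine triangles up to step $i$ and boundary edges. That region is exactly $\overline{\Phi}(w')$ glued to the triangle $t$ of $w_i$. In the spine, all triangles strictly inside disappear, being enclosed by the close matching. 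What remains is $t$ plus a monochromatic face $f$, whose colour is opposite to the new vertex of $t$.

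The degree of $f$ is computed by counting. Inside $w'$, the number of $c$-steps minus the number of $b$-steps (for $w_i=a$) equals the net loss of active boundary edges of the opposite colour. That is $\ell$, giving degree $\ell+1$ once the glued edge is included. The self-similarity statement, that this region equals $\overline{\Phi}(w')$, follows because $\overline{\Phi}$ acts locally near the top edge, so running it on $w'$ started from the edge of $t$ reproduces the same construction.

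\textbf{Main obstacle.} The hardest step is the matching identification in the two-active-edge case: showing that the \enquote{last visited triangle} rule of $\overline{\Phi}$ selects precisely the close-matching partner. It also requires that the active edges on each side correspond in stack (LIFO) order to the unmatched $a$'s and $b$'s. I would establish this stack invariant first, as part of the induction hypothesis. With it in hand, the spine bookkeeping in every case is routine.
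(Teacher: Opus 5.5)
The paper gives no argument of its own here; it defers to the proof of \cite[Theorem 3.6]{BHS23}. An induction along the recursive definition of $\overline{\Phi}$ is a sensible way to supply one. However, there is a genuine gap: your treatment of the one-sided $c$-step (rule~\ref{enu:one_active_c}) is wrong, and your invariant breaks there.

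\textbf{What rule~\ref{enu:one_active_c} actually does.} Such a $c$-step has exactly one matching step. Call it $w_j$: it is the most recent unmatched $b$ (resp.\ $a$), and its triangle $t_j$ carries the active edge $e_{\mathrm{right}}$ (resp.\ $e_{\mathrm{left}}$). The paper's wording (``has two matching steps'') is loose here. For $\widehat{\pi}(w)$ to be a word in $\{a,b\}\cup\{\overline{a}_\ell,\overline{b}_\ell\}$ at all, this single matching must count as a close matching. That is also how the paper uses it: the ``close-matched but not far-matched'' case in the proof of \cref{lem:colour_change} is exactly rule~\ref{enu:one_active_c}. So this step does close a cone excursion, and $w_j\cdots w_N$ collapses to $\overline{a}_\ell$ (resp.\ $\overline{b}_\ell$).

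Geometrically, and independently of any convention, the recoloured vertex is that of $t_j$ and the new top edge is the active edge of $t_j$. The interface therefore now stops at $t_j$. Every bicolour triangle created after $w_j$ leaves the spine, and the region they fill becomes a monochromatic face $f$ of degree $\ell+1$ attached to $t_j$. The rule c) double edge you noticed is neither absorbed at a higher nesting level nor part of the boundary: it is this face $f$ when $\ell=1$.

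\textbf{Counterexamples.} For $w=ac$, $\overline{\Phi}(w)$ is a single triangle whose spine has a black face of degree $2$, with nothing nested above it. The lemma holds only because $\widehat{\pi}(ac)=\overline{b}_1$. For $w=bbacc$, before the last step $\widehat{\pi}=bb\,\overline{b}_1$ and the spine has three bicolour triangles. The final $c$ is matched only to $w_1$. Afterwards $\widehat{\pi}=\overline{a}_1$, and the spine has one triangle and a white face of degree $2$. So both ``$\widehat{\pi}$ keeps its letters'' and ``the spine keeps its bicolour triangles'' fail, and the first bullet of your induction is lost.

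\textbf{How to repair it.} Drop your middle case. In $\krewerasbar$ every $c$-step closes exactly one close matching, with its most recent matching partner $w_j$. Rules~\ref{enu:two_active_c} and~\ref{enu:one_active_c} both recolour the vertex of $t_j$ (\cref{lem:colour_change}) and both end the interface at $t_j$. They differ only in what happens to the edge that $w_N$ contributes to $\partial f$: it is glued to an older active edge if $w_N$ is far-matched, and it becomes an inactive boundary edge otherwise.

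Your degree count must therefore include inactivated edges, not only lost active edges. For $w=aacc$ we have $\widehat{\pi}(w)=\overline{b}_2$ and nothing is ever glued, yet $f$ has degree $3$: one edge of $t_1$ and two inactive black edges. The correct count, say for $w_j=a$, is this: each of the $\ell$ $c$-steps of $w'$ that are not $b$-matched inside $w'$ contributes exactly one edge of $\partial f$, either glued or inactive.

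The same dichotomy, rather than ``$\overline{\Phi}$ acts locally'', is what the self-similarity claim rests on. A $c$-step of $w'$ that is far-matched to a letter before $w'$ becomes one-sided when $\overline{\Phi}$ is run on $w'$ alone. It then recolours the same vertex but inactivates the edge instead of gluing it. This is precisely why the edges of $\partial f$ that are glued in $M$ appear as boundary edges of $\overline{\Phi}(w')$.
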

Next, let us state a lemma describing which vertex has its colour changed at a $c$-step. A consequence of the proof is that each of the four cases for $c$-steps (cf. \Cref{fig:bijection}) are determined by whether the $c$-steps is far-matched or not and whether it is close matched to an $a$-step or a $b$-step.    
\begin{lemma}\label[lemma]{lem:colour_change}
    Let $w = w_1 \cdots w_N \in \krewerasbar$ be a Kreweras word, and let $j < k \in \lb 1,N \rb$ be two indices so that $w_j \in \{a,b\}$ and is close-matched with $w_k = c$. Then in the iterative construction of the bijection $\Phi$, the $c$-step $w_k$ changes the colour of the vertex of the triangle created by the step $w_j$.    
\end{lemma}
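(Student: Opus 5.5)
We prove \cref{lem:colour_change} by strong induction on the length $k-j$ of the close matching, after first reducing to the case where $w_j$ is the \emph{last} letter before $w_k$ that is not enclosed by a matching inside $w_j\cdots w_k$. The structural fact we use is this: if $w_j$ is close-matched with $w_k$, then the subword $w_{j+1}\cdots w_{k-1}$ decomposes as a concatenation of matched pieces and unmatched letters. In the case $w_j=a$, every $c$-step strictly between $j$ and $k$ is matched (with its close match) to some $a$ or $b$ inside $\lb j+1,k-1\rb$. Moreover, the strict excess of $a$-steps over $c$-steps in every prefix $w_j\cdots w_m$ with $m<k$ means that the $a$-step $w_j$ itself is never "consumed" before time $k$.

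We then track the boundary of $\overline{\Phi}(w_1\cdots w_m)$ for $m\in\lb j,k\rb$. At time $j$, the triangle created by $w_j$ (say $w_j=a$) has a new white top vertex $v$, and its two new edges, on either side of the top edge, are active. The inductive claim is that for every $m\in\lb j,k-1\rb$:
\begin{itemize}
    \item $v$ is still a boundary vertex of the current map, still white;
    \item $v$ lies on the left side of the current top edge, as an endpoint of the active segment;
    \item the boundary between $v$ and the top edge consists of active edges created after time $j$, whose number is governed by the $a$-excess of $w_{j+1}\cdots w_m$.
\end{itemize}
For an $a$ or $b$ step this is immediate, since a triangle is glued onto the top edge. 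For a $c$-step $w_m$ with $m<k$, we use the induction hypothesis on its own close matching $(w_i,w_m)$ with $j<i<m$: that $c$-step recolours the vertex created by $w_i$, which is different from $v$. The matching structure keeps $v$ in place (it is neither glued away nor recoloured), because by \cref{lem:bhs} the cone excursion $w_i\cdots w_m$ builds a submap $\overline{\Phi}(w_i\cdots w_m)$ that is attached to the rest of the map only along edges created after $v$.

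Finally, at time $k$, the prefix-count condition says that the top-left vertex is $v$ itself: the number of $a$-steps and $c$-steps in $w_j\cdots w_k$ balance, so there are no active edges strictly between $v$ and the top on the left. We then check against the case analysis (\ref{enu:two_active_c}) and (\ref{enu:one_active_c}) that the $c$-step recolours precisely the top-left vertex. In case (\ref{enu:two_active_c}) this requires the triangle adjacent to $e_\mathrm{left}$ (the triangle of $w_j$ or a later one) to be the later of the two triangles visited by the interface. That holds because triangles on the right were created before $w_j$, which is exactly where the \emph{close} (rather than far) matching is used: the far-matched step $w_i$ with $i<j$ lies on the right side.

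The main obstacle is this last point. We must rule out the symmetric branch of case (\ref{enu:two_active_c}), where the right triangle is visited later by the interface. We must also identify that the active edge on the right comes from an earlier step. We would handle this by an induction on the order of interface traversal versus creation time, using that the interface visits triangles in the order they are created by $a/b$ steps.
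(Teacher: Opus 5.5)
\textbf{Gap at the step you flag as the main obstacle.} That step is the whole content of the lemma, and it is left unproved.

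Your left-hand bookkeeping does correctly give that at time $k-1$ the top-left vertex is $v$ and $e_{\mathrm{left}}$ is the edge created by $w_j$. This part needs neither the strong induction nor \cref{lem:bhs}: in every branch of the $c$-rule, the left active edge adjacent to the top, when it exists, is consumed, and only an endpoint of the top edge can become internal.

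But suppose an active edge also exists on the right. Then \cref{enu:two_active_c} recolours $v$ only if the interface of $\overline{\Phi}(w_1\cdots w_{k-1})$ visits the triangle of $w_j$ after the triangle next to $e_{\mathrm{right}}$. You need two facts:
\begin{enumerate}
\item[(i)] the triangle next to $e_{\mathrm{right}}$ was created by a $b$-step $w_i$ with $i<j$;
\item[(ii)] the interface visits that triangle before the triangle of $w_j$.
\end{enumerate}
Your invariant tracks only the left side, so it says nothing about (i). The justification ``the far-matched step lies on the right side'' assumes what must be shown. For (ii) you only announce an induction on creation order.

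\textbf{How the paper settles both points.}

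For (i), suppose $X_2(m)\le X_2(k)$ for some $m\in\lb j,k-1\rb$. Then the step following the last such $m$ is a $b$-step later than $w_j$ that is matched with $w_k$, contradicting closeness. Hence every $b$-step strictly between $j$ and $k$ is matched before $k$. Every $a$-step there is also matched before $k$, because $w_j$ is matched with $w_k$. So $w_j$ is the last unmatched step of $w_1\cdots w_{k-1}$. By \cite[Theorem 2.11]{BHS23}, active edges correspond in order to unmatched $a$/$b$-steps. Therefore $e_{\mathrm{right}}$ exists only when $w_k$ is far-matched, and then it belongs to the triangle of the far match $w_i$, with $i<j$.

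For (ii), no new induction is needed. Unmatched steps of $w_1\cdots w_{k-1}$ are not enclosed by any close matching. So they are letters $a,b$ of $\widehat{\pi}(w_1\cdots w_{k-1})$, in creation order. The first bullet of \cref{lem:bhs} says the interface traverses their triangles in that order. A blanket claim that the interface visits all triangles in creation order is not what you want: triangles enclosed in completed cone excursions are not traversed at all.

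Hence $w_j$'s triangle is the last active one visited, and \cref{enu:two_active_c} recolours $v$. If there is no right active edge, the symmetric branch of \cref{enu:one_active_c} recolours $v$ directly.
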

\begin{proof}
Suppose first that $w_k$ is far-matched, i.e., there exists $i \in \lb 1,N\rb$ which is far-matched with $w_k$. By \cite[Theorem 2.11(ii)]{BHS23}, $w_i$ and $w_j$
correspond to triangles with active edges on the boundary of $(M',\sigma') = \bar{\Phi}(w_1\cdots w_{k-1})$. Since $w_j$ and $w_k$ do not get reduced in $\widehat{\pi}(w_1\cdots w_{k-1})$, by \cref{lem:bhs}, the percolation interface passes through the triangles in order, and so passes through the triangle of $w_j$ after that of $w_i$. In fact, $w_j$ is the last active triangle that the interfaces passes through, since it is the last unmatched $a$- or $b$-step in $w_1 \cdots w_{k-1}$ and the order of active edges along the boundary does not change due to $c$-steps. By \cref{enu:two_active_c} in the definition of the bijection, the vertex of the triangle created by $w_j$ changes colour (and is then merged).

Suppose now that $w_k$ is not far-matched, and assume that $w_j=a$. Then there is no unmatched $b$-step in $w_1\cdots w_{k-1}$. By \cite[Theorem 2.11(iii)]{BHS23}, there is therefore no active edge on the right-hand side of the top edge in $(M', \sigma')$. Moreover, since $w_j$ is the last unmatched step of $w_1\cdots w_{k-1}$, the edge on the left-hand side of the top edge in $(M', \sigma')$ belongs to the triangle created at step $w_j$. By \cref{enu:one_active_c} in the definition of the bijection, the colour of the vertex must be changed (and the previous top edge is made inactive). We reason similarly in the symmetric case $w_j=b$.
\end{proof}

We are now in position to prove \cref{lem:kreweras_peelstep_correspondence}.
\begin{proof}[Proof of \cref{lem:kreweras_peelstep_correspondence}]
    By \cref{lem:link_spine_peeling}, the triangles along the percolation interface correspond to the peeling steps, while by \cref{lem:bhs}, the triangles along the interface correspond to the symbols of $\widehat{\pi}(w)$. Moreover, in both cases, the ordering corresponds to the order of appearance in the peeling exploration along the percolation interface. More precisely, let $\tau = \tau_1\cdots\tau_L = \peelsteps{M,\sigma}$ and let $j \in \lb 1, L \rb$.
    \begin{enumerate}
        \item If $\tau_j = C_a$, then we discover a triangle whose top vertex is an internal white vertex and there is no new monochromatic face in $E_j$ which was not present in $E_{j-1}$; by \cref{lem:bhs}, the lack of such a face implies that $\widehat{\pi}_j(w)$ cannot be $\overline{a}_\ell$ or $\overline{b}_\ell$, and so it is either $a$ or $b$. Yet it cannot be $b$: in the construction of the bijection $\Phi$, when a $b$-step is encountered, a new triangle with a \textit{black} vertex is glued to the top edge. Since the triangle discovered has a white internal vertex, a $b$-step triangle would need to change the colour of its vertex later by a matching $c$-step. But by \cref{lem:colour_change}, $c$-steps only change the colour of the vertex introduced by their close-matched $a$- or $b$-step, whereas $\widehat{\pi}_j(w) \in \{a,b\}$ implies that the $a$- or $b$-steps were not close-matched. Thus, the vertex's colour does not change, and $\widehat{\pi}_{j}(w) = a$.
       The same reasoning shows that $C_b$ corresponds to $b$. 
        \item If $\tau_j = L_\ell$ for some $\ell \ge 1$, then we know by \cref{lem:link_spine_peeling} 
        that the triangle $t$ which is the $j$-th one to be discovered by the interface, and is discovered at an $L_\ell$ step, is incident to a monochromatic white face $f$ in $\mathrm{Spine}(M, \sigma)$. The edges of $f$ belong either to the first $j$ triangles traversed by the interface or to the boundary of $M$. By \cref{lem:bhs}, we deduce that the pair $(t,f)$ is associated  with a step $\overline{a}_\ell$. Thus, the $j$-th letter of $\widehat{\pi}$ is $\overline{a}_\ell$. The same reasoning shows that $R_\ell$ corresponds to $\overline{b}_\ell$.
    \end{enumerate}
\end{proof}

\section{Branching exploration and Kreweras excursions}\label{sec:branching}
In this section, we give a formal statement of \cref{thm:branching}, as well as present its proof. 

We first formally define the branching peeling exploration. In this process, when a triangle separates the unexplored region into two, rather than filling in one part completely (as in the percolation interface peeling) or replacing the part by a single face (as in the unfilled interface peeling), the process branches, with one branch exploring the part through which the percolation interface goes, and the other branch the other part. The process is no longer linearly ordered, but rather is indexed by a tree. This tree, which lies on the dual graph, is sometimes called the exploration tree and is drawn on the right-hand side of \cref{fig:correspondence_branching}. The details are as follows.

Let $t$ be an inner triangle of $(M, \sigma) \in \perc$. Here, for our purpose, it is more convenient to record the unexplored region at every step instead of the explored region. We define a sequence of maps $\mathtt{EB}_t(n)$, where each map has a distinguished ``explored'' face. We set $\mathtt{EB}_t(0) = (M, \sigma)$, with the external face marked as explored, and start peeling the root edge $\rootedge$.

Now, suppose the map at time $k\ge 0$ is $\mathtt{EB}_t(k)$.
If the next peeling step is of the type $C_a$ (or $C_b$), then continue the exploration along the current interface. Construct 
$\mathtt{EB}_t(k+1)$ from $\mathtt{EB}_t(k)$ by removing the discovered triangle next to the root edge of $\mathtt{EB}_t(k)$, the root edge of $\mathtt{EB}_t(k+1)$ being the other bicolour edge of the discovered triangle.

If the next step is of type $R_\ell$ or $L_\ell$, recall that discovered triangle splits the unexplored region into two unexplored parts $U_1$ and $U_2$, the first one containing the rest of the percolation interface and the second one being a submap $U_2$ whose boundary vertices have the same colour. We consider the two cases:
\begin{itemize}
    \item If $t$ is in $U_1$ we continue the exploration along the interface. Set $\mathtt{EB}_t(k+1) = U_1$, with the root edge being the edge belonging to $t$.
    \item If $t$ is in $U_2$, the next peeled edge will be the monochromatic edge of the triangle discovered in the previous step. Let $\mathtt{EB}_t(k+1)$ be  the modified map obtained from $U_2$ where this edge is recoloured (from a left white vertex to a right black vertex). The root edge of $\mathtt{EB}_t(k+1)$ is next peeled edge. 
\end{itemize}
The exploration branch stops when it discovers the triangle $t$.

We now define the quantities $P_t(n)$, $T_t(n)$ and $D_t(n)$ that were informally introduced in \cref{thm:branching}.

Let $(M, \sigma) \in \perc$ be a percolated triangulation with boundary, let $t$ be an inner triangle, and let $L_t$ be the total number of triangles discovered by the exploration branch. For $n \in \lb 0, L_t -1\rb$, let $P_t(n)$ be the perimeter of the unexplored region $\mathtt{EB}_t(n)$ in the exploration branch targeting $t$. 

Let $X:\lb 0, N \rb \to \integers^2$ be the Kreweras walk corresponding to $(M,\sigma)$, and let $m_t$ be the time in $\lb 0, N-1\rb$ such that the triangle $t$ is constructed by the step $X(m_t+1)-X(m_t) \in \{(1,0), (0,1) \}$ by the Bernardi bijection. Let $L'_t$ be the total number of spine steps in $X\vert_{\lb 0, m_t \rb}$. For all $n \in \lb 1, L'_t \rb$, let $T_t(n)$ be the $n$-th spine step time of $X\vert_{\lb 0, m_t \rb}$. 

For all $n \in \lb 1, L'_t \rb$, let $D_t(n)$ be the first time $r\ge m_t$ such that $X(\lb T_t(n), r \rb) \subset X(r+1)+ \naturals^2$ if such a time exists, and $D_t(n)=N$ otherwise. Note that when such a time exists, $D_t(n)$ is the index of the first unmatched $c$ in the subword $w_{T_t(n)+1} \cdots w_N$. The example in \cref{fig:kwalksexample} shows that $T(n)$ is not always equal to $T_t(n)$.

\begin{figure}
    \centering
    \begin{subfigure}[t]{0.48\linewidth}
    \vskip 0pt
        \centering
     \includegraphics[width=0.5\linewidth]{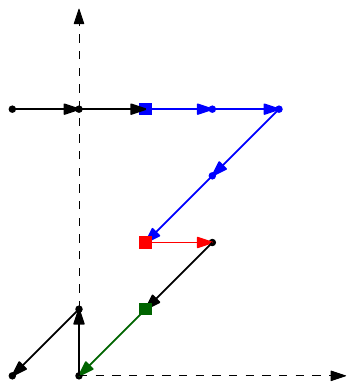}
        \caption{The red square corresponds to $m_t=7$, the blue square is the starting point of the third spine step, and the green square corresponds to $D_t(3)$. The blue trajectory is contained in the quadrant with the dashed axis.}
        \label{fig:random_walk_web_a}
    \end{subfigure}
    \hfill
    \begin{subfigure}[t]{0.48\linewidth}
    \vskip 2.18cm
    \centering
    \begin{tabular}{c|c|c|c}
        $n$ & $T_t(n)$ & $D_t(n)$ & $T(n)$\\
        \hline
       1  & 0 & 11 & 0\\
           \hline
       2  & 1 & 11 & 1
        \\   \hline
       3  & 2 & 8 & 9
         \\   \hline
       4  & 6 & 8 & -
    \end{tabular}
    \caption{If $t$ is the triangle such that $m_t=7$, then the reduced word corresponding to $X\vert_{\lb 0, m_t \rb}$ is $aa \bar a_2 a$.  }
    \label{tab:Dtn_illustration}
    \end{subfigure}
    \caption{The Kreweras walk and the values of $T_t$, $D_t$ and $T$ for the 11-letter word $aaaaccaccbc$. The reduced word is $a \bar a_4 \bar b_1$. The triangle $t$ which gives $m_t = 7$ is not on the interface path, so the reduced word of $X\vert_{\lb 0, m_t \rb}$ is not a prefix of the reduced word of the entire walk, and their spine steps are different.}
    \label{fig:kwalksexample}
\end{figure}

\begin{theorem}\label{thm:branchingdetailed}
 Let $(M, \sigma) $ be a percolated planar triangulation where the boundary has exactly one white vertex, and let $X$ be its corresponding Kreweras walk. Let $t$ be a triangle of $M$. Then $L_t=L'_t$, and for all $n \in \lb 0, L_t-1 \rb$,
    \begin{equation} \label{eq:branching_kreweras_norm}
        P_t(n)=\lVert X(T_t(n+1))- X(D_t(n+1)) \rVert_1 +2.    
    \end{equation}    
\end{theorem}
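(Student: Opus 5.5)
The plan is to reduce \cref{thm:branchingdetailed} to \cref{thm:explicit_perimeter}, applied recursively, by tracking how the branch towards $t$ sits inside nested cone excursions. The structural input is \cref{lem:bhs}. Each $\overline{a}_\ell$ or $\overline{b}_\ell$ step of $\widehat{\pi}(w)$ corresponds to a pair $(t',f)$. The filled-in submap, namely $t'$ together with the interior of $f$, equals $\overline{\Phi}(w')$, where $w'$ is the corresponding cone excursion. When the branch enters $U_2$ at an $L_\ell$ or $R_\ell$ step, the recoloured map $\mathtt{EB}_t(k+1)$ is, up to removing the discovered triangle and recolouring the peeled edge, exactly the map encoded by $w'$ with its first letter and closing $c$ removed. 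Concretely, if $w'=w_i\cdots w_k$, then the inner word $w_{i+1}\cdots w_{k-1}$ is a Kreweras excursion relative to its endpoint $X(k)$. By the conjugation/colour-swap symmetry (cf.\ the Remark after \cref{thm:branching}), it encodes the new unexplored region with Dobrushin boundary conditions. Its quadrant is the translate $X(k)+\naturals^2$.

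The proof will be by induction on the number of times the branch towards $t$ leaves the interface, i.e.\ on the depth of nesting of close matchings enclosing the step $m_t+1$.

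\textbf{Base case.} The step $w_{m_t+1}$ is a spine step of $w$, so $t$ lies on the interface. Then $X\vert_{\lb 0,m_t\rb}$ is a prefix whose reduced word is a prefix of $\widehat{\pi}(w)$. The spine steps satisfy $T_t(n)=T(n)$ for $n\le L'_t$. For every such $n$ there is no unmatched $c$ after $T_t(n)$ closing the walk before $N$, so $D_t(n)=N$. The identity then reads exactly as in \cref{thm:explicit_perimeter}, restricted to times before $t$ is discovered. The equality $L_t=L'_t$ follows from \cref{lem:kreweras_peelstep_correspondence}, since the branch discovers precisely the spine triangles up to and including $t$.

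\textbf{Inductive step.} Let $(w_i,w_k)$ be the outermost close matching enclosing step $m_t+1$. It appears in $\widehat{\pi}(w)$ as a letter $\overline{a}_\ell$ or $\overline{b}_\ell$ at position $j$. For $n<j$ the branch follows the interface. Two things must be checked here.
\begin{itemize}
\item The spine steps of $X\vert_{\lb 0,m_t\rb}$ before time $i$ coincide with those of $X$. This holds because close matchings closed before $i$ are the same.
\item $D_t(n)=N$ for these $n$, and the formula for $P_t(n)$ is that of \cref{thm:explicit_perimeter}.
\end{itemize}
At step $j$ the branch enters $U_2$. From then on, the spine steps of $X\vert_{\lb 0,m_t\rb}$ are the step $w_i$ itself, followed by the spine steps of the inner prefix $w_{i+1}\cdots w_{m_t}$. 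The matching $(w_i,w_k)$ is not closed within $\lb 0,m_t\rb$, so $w_i$ becomes a spine step of the restriction; this is the situation in \cref{fig:kwalksexample}. For these times, $D_t(n)$ is the first unmatched $c$ after $T_t(n)$ in the inner word. This time is at most $k$, and it equals $k$ at the outermost level. Hence $X(D_t(n+1))$ plays the role of $X(N)$ for the submap. Applying the induction hypothesis to the triangulation $\mathtt{EB}_t(j)$ and its word, after translating by $X(i)$ and taking the $L_1$ distance to the endpoint, gives \eqref{eq:branching_kreweras_norm}. It remains to check the perimeter at the junction: $P_t(j)=\ell+1$ must equal $\lVert X(T_t(j+1))-X(k)\rVert_1+2$. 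Here $T_t(j+1)$ is the time after $w_i$ is taken, and $X(i)-X(k)$ has norm $\ell$. This gives a value of $\ell+1\pm$ the unit step, which must be reconciled with the definition of $T_t$ as the time \emph{before} the step.

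\textbf{Main obstacle.} The hard part is precisely this junction bookkeeping. The branch into $U_2$ uses the filled triangle $t'$ as a peeling step of the branch, and recolours an edge. I must verify that the index shift between the outer walk's letter $\overline{a}_\ell$ (a single step) and the restricted walk's spine step $w_i$ (followed by the inner spine steps) matches the branch's count of discovered triangles. I must also verify that the recoloured submap really is $\overline{\Phi}$ of the inner excursion with a boundary having a single vertex of the opposite colour. I would first try to establish this via \cref{lem:colour_change}: the closing $c$ recolours exactly the vertex created by $w_i$, which is the recolouring in the definition of $\mathtt{EB}_t$. I would then check the off-by-one on the example of \cref{fig:kwalksexample}.
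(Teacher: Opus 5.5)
\textbf{Verdict: same approach as the paper, but the junction step you leave open contains a real error.}

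Your strategy matches the paper's proof:
\begin{itemize}
\item follow the interface until the first $L_\ell$/$R_\ell$ step whose separated part contains $t$;
\item identify the recoloured part with $\overline{\Phi}(w_{i+1}\cdots w_{k-1})$, using the last point of \cref{lem:bhs} and \cref{lem:colour_change};
\item apply \cref{thm:explicit_perimeter} to this inner walk, and recurse on the nesting depth.
\end{itemize}

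\textbf{The error.} You assert that $D_t(n)=k$ for times inside the excursion, and that $\lVert X(i)-X(k)\rVert_1=\ell$. Both are wrong. By definition, $D_t(n)$ is the first $r\ge m_t$ with $X(\lb T_t(n),r\rb)\subset X(r+1)+\naturals^2$, so $w_{r+1}$ is the closing $c$-step. Hence $D_t(n)=k-1$, not $k$. This holds for the times $n>j$ at the first nesting level; $D_t(n)=N$ only for $n\le j$. Separately, for $w_i=a$ you have $X(k)=X(i-1)+(0,-\ell)$, so $X(i)-X(k)=(1,\ell)$, whose norm is $\ell+1$ (the case $w_i=b$ is symmetric). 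With your identification, the formula would give $P_t(j)=\ell+3\neq \ell+1$. This is why your check came out ``$\ell+1\pm$ a unit step'': the discrepancy is real, not something to reconcile with the convention for $T_t$.

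\textbf{The fix, which makes the junction automatic.}
\begin{itemize}
\item The inner word $w_{i+1}\cdots w_{k-1}$ is the walk $X\vert_{\lb i,k-1\rb}$. It runs from $X(i)$ to $X(k-1)=X(k)+(1,1)$.
\item It stays coordinatewise $\ge X(k-1)$, because the excursion stays in $X(k)+\naturals^2$ on $\lb i,k-1\rb$.
\item Since $X(i)-X(k-1)=(0,\ell-1)$, after translation it lies in $\krewerasnk{n'}{\ell-1}$ for some $n'$. It therefore encodes a map of perimeter $\ell+1$.
\item Its endpoint is exactly $X(D_t(n))=X(k-1)$. So $X(D_t(n))$ genuinely plays the role of $X(N)$, and
\[
P_t(j)=\lVert X(i)-X(k-1)\rVert_1+2=\ell+1
\]
is simply the statement $P(0)=p$ for the inner map.
\end{itemize}

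\textbf{One further point to state.} You should say explicitly why $D_t(n)$ computed in the full walk agrees with the value computed in the inner walk (with its own endpoint $k-1$), shifted by $i$. There are two cases. If the cone condition is first met at some $r<k-1$, the quadrant condition is translation-invariant, so the same $r$ works in both walks. Otherwise $r=k-1$ works in the full walk, by the containment in $X(k)+\naturals^2$ above.

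\textbf{Minor.} $w_i$ is already a spine step of the full walk, since it is the letter $\overline{a}_\ell$ or $\overline{b}_\ell$ of $\widehat{\pi}(w)$; it does not ``become'' one. What changes in the restriction $X\vert_{\lb 0,m_t\rb}$ is that $w_i$ is no longer close-matched, so the enclosed steps become spine steps.
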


The theorem is illustrated in \cref{fig:correspondence_branching}.
\begin{figure}[H]
    \centering
    \includegraphics[width=0.75\linewidth]{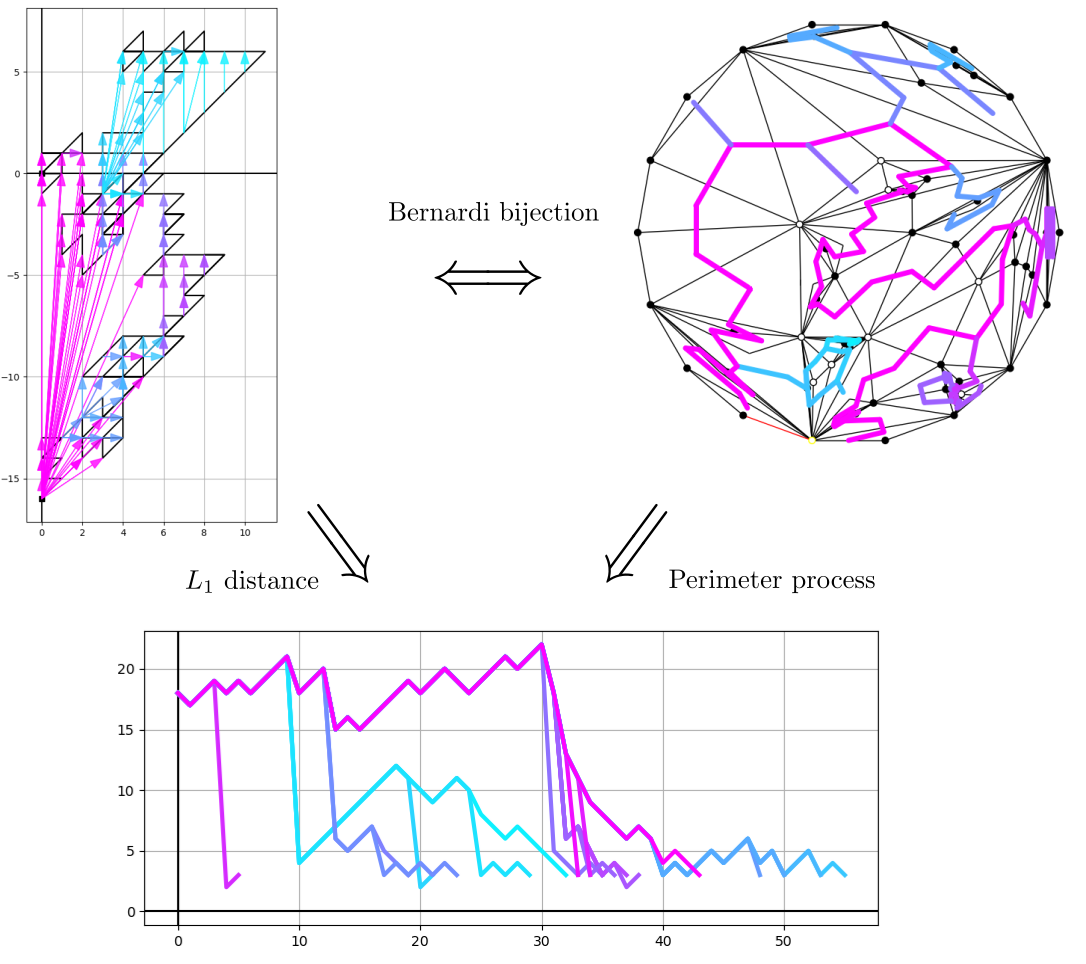}
    \caption{\textbf{Top left}: a Kreweras walk with $200$ steps. \textbf{Top right}: the equivalent percolated triangulation, along with its exploration tree. When a branch breaks off from its parent, it is drawn in a different colour. Each branch ends at a different triangle $t$. \textbf{Bottom}: the length of the perimeter of the exploration process for the maximal branches. The colours correspond to those in the triangulation. \cref{thm:branchingdetailed} states that the perimeter process of the branch reaching a triangle $t$ is given by the $L_1$ distance of the points $X(T_t(\cdot))$ from $X(D_t(\cdot))$. The difference vectors are marked in colour in the Kreweras walk. }
    \label{fig:correspondence_branching}
\end{figure}

\begin{proof}
Consider first the case where the triangle $t$ is traversed by the percolation interface.

By \cref{lem:bhs}, $t$ is constructed during a spine step of $X$. Then, $X$ cannot have a close-matched $a$- or $b$-step appear before time $m_t$ and whose matching $c$ step appears after $m_t$. In particular, the spine steps of $X\vert_{\lb 0, m_t \rb}$ are the same as the spine steps of $X$ up to time $m_t$. This implies that $L'_t=L_t \le L$, and $T_t(n)= T(n)$ for all $n \in \lb 1, L_t \rb$.

Let $n \in \lb 1, L_t \rb$, and suppose that $r \in \lb m_t, N-1 \rb$ is the first time where $X(\lb T_t(n), r \rb ) \subset X(r+1) + \N^2$. It follows that $X(r+1)-X(r)$ is a $c$-step, and this $c$-step has to be close-matched by an $a$- or $b$-step appearing before time $T_t(n)$. This is not possible since the triangle $t$ is constructed during a spine step of $X$. Therefore, no such $r$ exists, and $D_t(n)=N$. Thus, \eqref{eq:branching_kreweras_norm} is of the form $P_t(n) = \lVert  X(T(n+1))-X(N) \rVert_1 + 2$, which is the content of \cref{thm:explicit_perimeter}.

Next, assume that $t$ is not traversed by the percolation interface. Let $j_0\ge1$ be the first step in the branching peeling exploration of the form $R_\ell$ or $L_\ell$ such that $t$ lies in the part $U_2$ that is separated from the part $U_1$ containing the rest of the interface. Note that for all $j \in \lb 1, j_0 \rb$, for the same reason as in the first part of the proof, we have $T_t(j)= T(j)$ and $D_t(j)=N$. 

Let $w=w_1 \cdots w_N$ be the Kreweras word of $X$.
Since the peeling step was either $R_\ell$ or $L_\ell$, by \cref{lem:kreweras_peelstep_correspondence} the letter $w_{T(j_0)+1}\in \{a,b\}$ is closely matched with some $c$-step $w_q$. The triangle $t$ is then discovered in an $a$- or $b$-step that is enclosed by the close matching $(w_{T(j_0)+1}, w_q)$. Moreover, by the last point of \cref{lem:bhs}, the map formed by $U_2$ and the triangle discovered at step $w_{T(j_0)+1}$ is equal to $\Phi(w_{T(j_0)+1} \cdots w_q)$. 
Let $\tilde w = w_{T(j_0)+2} \cdots w_{q-1}$.
Then $\bar \Phi(\tilde w) = \mathtt{EB}_t(j_0+1)$. Indeed, by definition, $\mathtt{EB}_t(j_0+1)$ was constructed by recolouring one vertex of a monochromatic face; and by \cref{lem:colour_change}, this colour change is exactly how the matched $c$-step acts in the iterative construction of $\bar{\Phi}$.

If $t$ is traversed by the percolation interface of $\mathtt{EB}_t(j_0+1)$, then we conclude by applying \cref{thm:explicit_perimeter} as in the first case, noting that $D_t(j)=q-1$ for all $j \in \lb j_0+1, L_t \rb$. Otherwise, we recursively iterate the same reasoning as in the previous paragraph.

\end{proof}

\section{Acknowledgments}
RG acknowledges funding by the Royal Society. EK acknowledges the support of a Research Fellowship from Emmanuel College, Cambridge. FRK was supported by the Carlsberg Foundation, grant CF24-0466.
This paper was produced without the use of artificial intelligence.

\bibliographystyle{alpha}
\bibliography{biblio}

\end{document}